\theoremstyle{plain}
\newtheorem{theorem}{Theorem}[section]
\newtheorem{proposition}[theorem]{Proposition}
\newtheorem{corollary}[theorem]{Corollary}
\theoremstyle{definition}
\newtheorem{remark}[theorem]{Remark}
\numberwithin{equation}{section}
\newcommand{\LCA}{{\mathcal M}_l(A)}
\newcommand{\RCA}{{\mathcal M}_r(A)}
\newcommand{\DCA}{{\mathcal M}(A)}
\newcommand{\Ker}{{\textup{Ker}\,}}
\begin{document}
%%%%%%%%%%%%%%%%%%%%%%%%%%%%%%%%%%%%%%%%%%%%%%%%%%%%%%%%%%%%%%%%%%%%%%%%%%%%%%%%%%%%%%%

%%%%%%%%%%%%%%   Top matter %%%%%%%%%%%%%%%%%%%%%%%%%%%%%%%%%%%%%%%%%%%%%%%%%%%%%%%%%%%

\title{Extending representations of normed algebras in Banach spaces}

             % Linebreaks by \\, leave blank if not needed

\translator{}
             % Linebreaks by \\, leave blank if not needed

\dedicatory{}
            % Linebreaks by \\, leave blank if not needed

\author{Sjoerd Dirksen}
\address{Sjoerd Dirksen, Delft Institute of Applied Mathematics, Delft University of Technology, P.O.\ Box 5031,
2600 GA Delft,
The Netherlands}
\email{s.dirksen@tudelft.nl}

\author{Marcel de Jeu}
\address{Marcel de Jeu, Mathematical Institute,
Leiden University, P.O.\ Box 9512, 2300 RA Leiden,
The Netherlands}
\email{mdejeu@math.leidenuniv.nl}

\author{Marten Wortel}
\address{Marten Wortel, Mathematical Institute,
Leiden University, P.O.\ Box 9512, 2300 RA Leiden,
The Netherlands}
\email{wortel@math.leidenuniv.nl}

\begin{abstract}
Let $X$ be a non-degenerate left Banach module over a normed algebra $A$ having a bounded approximate left identity. We show that, if $A$ is a left ideal of a larger algebra, then this representation can be extended to a representation of the larger algebra. Based on this result, we study in detail the existence and properties of representations of the various centralizer algebras of $A$ which are compatible with the original representation of $A$. As a special case we obtain that, if $A$ embeds as a topological algebra into the bounded operators on $X$, then the left centralizer algebra of $A$ embeds as a topological algebra as the left normalizer of the image, and the double centralizer algebra of $A$ embeds as a topological algebra as the normalizer of the image. We also consider ordered and involutive contexts, and cover the right-sided cases, which are not always the obvious analogues of the left-sided cases, in detail as well.
\end{abstract}

%\date{}

\subjclass[2000]{Primary 46H25; Secondary 46H10, 46H15}

\keywords{Normed algebra, Banach algebra, centralizer algebra, approximate identity, representation, Banach module}

\maketitle

%%%%%%%%%%%%%%   Start of the text %%%%%%%%%%%%%%%%%%%%%%%%%%%%%%%%%%%%%%%%%%%%%%%%%%%%

\section{Introduction}\label{sec:introduction}

This paper is concerned with the possibility of extending a given Banach representation of an ideal of a normed algebra to the whole algebra, and also, given a Banach representation of a normed algebra, with the possibility of defining representations of the various centralizer algebras of that algebra which are compatible with the given representation. These two issues are strongly related.

Our interest in this problem arose from the study of covariant representations of Banach algebra dynamical systems. In \cite{DidJWo} a crossed product Banach algebra is constructed from a Banach algebra dynamical system and a collection of covariant Banach space representations thereof, and the question is, roughly speaking, whether all representations of the crossed product are integrated forms of the given covariant representations of the original dynamical system. For $C^*$-dynamical systems and Hilbert representations the answer is affirmative, see, e.g., \cite{Williams}. The standard method to establish the result in that case is to extend a given representation of the $C^*$-crossed product to its multiplier algebra, after which the sought covariant representation of the $C^*$-dynamical system can be found with the aid of the canonical maps of the group and the algebra into the multiplier algebra. In this method, critical use is made of the standard result that a non-degenerate Hilbert representation of a closed two-sided ideal of a $C^*$-algebra extends to the algebra. For our case we needed a similar result for Banach representations, and with an upper bound for the norm of the extensions. With future applications in representation theory in Banach lattices in mind we were also interested in such results which take ordering into account.

Somewhat to our surprise, we were unable to find the results as we needed them in the literature. In the presence of a bounded approximate left identity, the seminal paper on centralizer algebras \cite{JohnsonLMS} contains material on Hilbert representations of centralizer algebras which are compatible with the original representation of the algebra, see \cite[Section 9]{JohnsonLMS}, but for more general spaces completeness assumptions on the image are made, cf.\ \cite[Theorem~20]{JohnsonLMS}. Likewise, when it is proved in \cite[Theorem~2.9.51]{Dales} that, in the presence of a bounded two-sided approximate identity, a Banach bimodule structure extends to the double centralizer algebra, it is assumed that the algebra is a Banach algebra, in order to be able to use the Cohen-Hewitt factorization theorem \cite[p.\ 61]{BonsallDuncan}, \cite[Theorem~2.9.24]{Dales}; the same holds for \cite[p.\ 17]{JohnsonMemoir}. Such completeness assumptions are, however, not necessary. As it turns out, it is possible to develop a theory for compatible non-degenerate Banach representations of centralizer algebras without using the factorization theorem, assuming only that the original algebra is a normed algebra with a suitable approximate identity. The key idea is essentially already in the proof of \cite[Theorem~21]{JohnsonMemoir}, but it appears that this has not yet been exploited systematically. This is done in the present paper, which can perhaps be regarded as a representation-theoretical supplement to the general material on centralizer algebras collected in \cite{PalmerI}.

It deserves to be mentioned at this point that, whereas the results in this paper on the existence of compatible representations of centralizer algebras of a normed algebra can, by passing to its completion, easily be derived from their versions for the centralizer algebras of a Banach algebra, this is no longer the case for our results on embeddings of centralizer algebras. The reason is simply that the centralizer algebras of a normed algebra may be strictly smaller than those of its Banach algebra completion. Thus our consideration of normed algebras rather than Banach algebras does not only make manifest that the factorization theorem is not needed, but it also avoids being unnecessarily restrictive as to the scope of the results.

The results on compatible representations of centralizer algebras in this paper rest on a basic theorem concerning extending a Banach representation of an ideal of a normed algebra to the algebra itself, cf.~the first part of Theorem~\ref{thm:introthm}. In spite of its elementary proof and its general relevance, we have not been able to find a reference for this result. It has some bearing even on the well known case of $C^*$-algebras, where, for Hilbert representations, such a results is usually stated for closed ideals and proved using GNS-theory, cf.~\cite[Proposition~2.10.4]{Dixmier}, \cite[Theorem~5.5.1]{Murphy}. There is an alternative approach to be found for the $C^*$-case which uses an approximate identity and which is close to ours, cf.\ \cite[II.6.1.6]{Blackadar}, \cite[Lemma~I.9.14]{Davidson}, but still the results in these sources are formulated under hypotheses which are more stringent than necessary: actually, the ideal needs only to be a left ideal, not necessarily closed, with a bounded left approximate identity for itself. Since they find it necessary to include a proof, the authors of \cite{Blackadar} and \cite{Davidson} also seem to be unaware of a reference for such a general extension result.

As a an illustration of what is implied by the mere presence of a bounded left approximate identity, we include the following excerpt of Theorem~\ref{thm:basicthm} and Theorem~\ref{thm:leftembeddings} in this introduction. The notions figuring in it will be properly defined later on, to avoid all possible misunderstanding, but they are the obvious ones. The representations in the formulation are norm continuous homomorphisms from the normed algebra in question to the algebra of bounded operators. Note that it is not assumed that the algebra is complete, nor that the ideal is closed.

\begin{theorem}\label{thm:introthm}
Let $A$ be a normed algebra, and let $X$ be a Banach space, with algebra of bounded operators $\mathcal B(X)$.
\begin{enumerate}
 \item If $J$ is a left ideal of $A$ containing a bounded approximate left identity for itself, and if $\pi:J\to\mathcal B(X)$ is a non-degenerate representation, then $\pi$ extends uniquely to a representation of $A$. If, in addition, $A$ is an ordered algebra, $X$ is an ordered vector space with a closed positive cone, $J$ contains a positive approximate left identity for itself, and $\pi$ is positive, then the extended representation is positive. Alternatively, if, in addition, $A$ has an involution which leaves $J$ invariant, $X$ is a Hilbert space, and $\pi$ is involutive, then the extended representation is involutive.
\item Suppose $A$ has a bounded approximate left identity, and $\pi:A\to\mathcal B(X)$ is a non-degenerate faithful representation which is an isomorphism of topological algebras between $A$ and $\pi(A)$. Then, as abstract algebras, $A$ and its double centralizer algebra $\DCA$ both embed canonically into the left centralizer algebra $\LCA$ of $A$. After identification one has $A\subset\DCA\subset\LCA$, and $\pi$ extends uniquely to a representation $\overline\pi:\LCA\to\mathcal B(X)$. Moreover, $\overline\pi$ is an isomorphism of topological algebras between $\LCA$ and the left normalizer of $\pi(A)$ in $\mathcal B(X)$, and its restriction to $\DCA$ (with its own norm) is an isomorphism of topological algebras between $\DCA$ and the normalizer of $\pi (A)$ in $\mathcal B(X)$. If, in addition, $A$ is an ordered algebra and has a positive bounded approximate left identity, $X$ is an ordered vector space with a closed positive cone, and $\pi$ is an isomorphism of ordered algebras, then the isomorphisms for $\LCA$ and $\DCA$ are isomorphisms of ordered algebras. Alternatively, if, in addition, $A$ has a bounded involution, $X$ is a Hilbert space and $\pi$ is involutive, then $\DCA$ has a bounded involution, and the isomorphism for $\DCA$ is an isomorphism of involutive algebras.
\end{enumerate}
\end{theorem}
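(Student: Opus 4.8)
The plan is to establish part (1) by hand from the bounded approximate left identity, and then to deduce part (2) by recognising $A$ as a left ideal of its own left centralizer algebra $\LCA$ and invoking part (1). For part (1), fix a bounded approximate left identity $(e_\lambda)$ for $J$ with $\|e_\lambda\|\le M$. Since $\pi$ is bounded and non-degenerate, the net $\pi(e_\lambda)$ is uniformly bounded and converges to the identity on the dense subspace spanned by the vectors $\pi(j)x$, hence strongly on $X$. As $J$ is a left ideal, $aj\in J$ for $a\in A$ and $j\in J$, so I define the candidate extension on the dense subspace by $\overline\pi(a)\pi(j)x:=\pi(aj)x$. The single identity $\pi(aj)x=\lim_\lambda\pi(ae_\lambda)\pi(j)x$ does all the work: it shows the definition is independent of the way a vector is written as $\sum_i\pi(j_i)x_i$ (apply $\pi(ae_\lambda)$ to the relation $\sum_i\pi(j_i)x_i=0$ and pass to the limit), and it simultaneously yields the bound $\|\overline\pi(a)\|\le M\|\pi\|\,\|a\|$ from $\|\pi(ae_\lambda)\|\le M\|\pi\|\,\|a\|$. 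Multiplicativity, the fact that $\overline\pi$ restricts to $\pi$ on $J$, and uniqueness are then immediate checks on the dense subspace, the last because any extension $\rho$ is forced to satisfy $\rho(a)\pi(j)x=\rho(aj)x=\pi(aj)x$.

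\emph{Ordered and involutive refinements of (1).} If the cone is closed and $e_\lambda\ge 0$, then for $a\ge 0$ the products $ae_\lambda$ are positive, so each $\pi(ae_\lambda)$ is a positive operator and the strong limit $\overline\pi(a)$ is positive. In the Hilbert setting, the computation $\langle\overline\pi(a)\pi(j)x,\pi(k)y\rangle=\langle x,\pi(j^*a^*k)y\rangle=\langle\pi(j)x,\overline\pi(a^*)\pi(k)y\rangle$, valid because $(aj)^*=j^*a^*$ and $a^*k\in J$, gives $\overline\pi(a)^*=\overline\pi(a^*)$ on the dense subspace, hence everywhere.

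\emph{Part (2).} The structural point is that $a\mapsto L_a$ (left multiplication) realises $A$ as a left ideal of $\LCA$, since $L\circ L_a=L_{L(a)}$ for all $L\in\LCA$, and that $(L_{e_\lambda})$ is a bounded approximate left identity for this ideal with $\pi(L_a)=\pi(a)$ still non-degenerate. Part (1) therefore provides a unique extension $\overline\pi:\LCA\to\mathcal B(X)$ characterised by $\overline\pi(L)\pi(a)=\pi(L(a))$. Faithfulness gives injectivity of $\overline\pi$, and $\overline\pi(L)\pi(A)\subseteq\pi(A)$ places its image in the left normalizer; conversely, any $T$ with $T\pi(A)\subseteq\pi(A)$ defines $L:=\pi^{-1}\circ T\circ\pi\in\LCA$ with $\overline\pi(L)=T$, proving surjectivity, while the hypothesis that $\pi$ is a topological isomorphism onto $\pi(A)$ supplies the two-sided estimates $\|\overline\pi(L)\|\le M\|\pi\|\,\|L\|$ and $\|L\|\le\|\pi^{-1}\|\,\|\pi\|\,\|\overline\pi(L)\|$ making $\overline\pi$ a topological isomorphism. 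For $\DCA$ one uses the canonical map $(L,R)\mapsto L$ into $\LCA$; its injectivity, and indeed the injectivity of all the canonical maps, follows from the clean observation that a faithful non-degenerate representation forces both the left and right annihilators of $A$ to be trivial (if $cA=0$ then $\pi(c)$ kills the dense subspace $\pi(A)X$), so that $R$ is determined by $L$ through the linking relation $aL(b)=R(a)b$. That same relation yields $\pi(a)\overline\pi(L)=\pi(R(a))\in\pi(A)$, so $\overline\pi(L)$ lies in the full normalizer, and given $T$ normalizing $\pi(A)$ one recovers a pair $(L,R)\in\DCA$ with $\overline\pi(L)=T$ by setting $\pi(R(a))=\pi(a)T$; the estimates for $\DCA$ in its own norm then follow exactly as before.

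\emph{Main obstacle.} The analytic heart, and the only point where completeness might naively seem necessary, is the well-definedness and boundedness step in part (1); the approximate-identity identity dispatches it without any appeal to the Cohen--Hewitt factorization theorem, which is precisely the point of the paper. The remainder is bookkeeping. For the ordered statement in (2) one transports the cones along $\overline\pi$ and combines positivity of $\pi^{-1}$ with the ordered version of (1); for the involutive statement the genuine task is to introduce the natural involution on $\DCA$ induced by that of $A$, verify it is bounded using the boundedness of the involution on $A$, and check that $\overline\pi|_{\DCA}$ intertwines it with the adjoint on $\mathcal B(X)$.
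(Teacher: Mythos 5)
Your overall strategy coincides with the paper's: part (1) by a direct approximate-identity construction avoiding any factorization theorem, and part (2) by recognising $\lambda(A)=\{L_a : a\in A\}$ as a left ideal of $\LCA$ with bounded approximate left identity $(L_{e_\lambda})$, invoking part (1), and then producing the explicit inverses $T\mapsto \pi^{-1}(T\pi(\cdot))$ and $T\mapsto\pi^{-1}(\pi(\cdot)T)$ to identify $\LCA$ with the left normalizer and $\DCA$ with the normalizer. This is exactly the paper's route, and your part (1), including the ordered and involutive refinements, is complete and correct.

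There is, however, one genuine unverified step in part (2), and it is precisely the point to which the paper devotes a dedicated argument. To apply part (1) to the ideal $\lambda(A)\subset\LCA$, the induced map $\widetilde\pi:\lambda(A)\to\mathcal B(X)$, $L_a\mapsto\pi(a)$, must be a \emph{bounded} homomorphism with respect to the norm that $\lambda(A)$ inherits from $\mathcal B(A)$; otherwise the hypothesis ``non-degenerate representation'' of part (1) is not met, and the uniform bound $\Vert\widetilde\pi(L\circ L_{e_\lambda})\Vert\leq\Vert\widetilde\pi\Vert\,\Vert L\Vert\,\Vert L_{e_\lambda}\Vert$ on which the whole construction rests is unavailable. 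This boundedness is not automatic: $\Vert L_a\Vert=\sup_{\Vert b\Vert\leq 1}\Vert ab\Vert$ can be strictly smaller than $\Vert a\Vert$ (with only a bounded \emph{left} approximate identity, $\lambda$ need not be bounded below), so the estimate $\Vert\pi(a)\Vert\leq\Vert\pi\Vert\,\Vert a\Vert$ gives no control of $\Vert\pi(a)\Vert$ in terms of $\Vert L_a\Vert$. The gap is fillable with the very identity you already exploit: since $\textup{SOT-}\lim_\lambda\pi(e_\lambda)=\textup{id}_X$, one has $\pi(a)x=\lim_\lambda\pi(ae_\lambda)x=\lim_\lambda\pi(L_ae_\lambda)x$, and $\Vert\pi(L_ae_\lambda)\Vert\leq\Vert\pi\Vert\,\Vert L_a\Vert\,\Vert e_\lambda\Vert\leq M\Vert\pi\Vert\,\Vert L_a\Vert$, whence $\Vert\widetilde\pi\Vert\leq M\Vert\pi\Vert$; but as written your invocation of part (1) is not justified. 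A smaller remark of the same kind: the involutive statement for $\DCA$ in (2) is announced rather than proved; it does follow quickly from relations you established, since $\overline\pi(R^*)\pi(a)=\pi((R(a^*))^*)=\pi(R(a^*))^*=\bigl(\pi(a^*)\overline\pi(L)\bigr)^*=\overline\pi(L)^*\pi(a)$, so that $\overline\pi(R^*)=\overline\pi(L)^*$ by non-degeneracy, but this verification (or an appeal to the involutive case of part (1) applied to the involution-invariant ideal $\delta(A)\subset\DCA$, which is how the paper proceeds) should be made explicit.
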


The relation of the second part of Theorem~\ref{thm:introthm} with known results about centralizer algebras of the algebra of compact operators and about double centralizer algebras of $C^*$-algebras is discussed in Remarks~\ref{rem:isometricembedding} and~\ref{rem:compactoperators}.

For general $A$, it need not be the case that the canonical homomorphism of $A$ into $\LCA$ is injective, so that one cannot properly speak about extending representations from $A$ to $\LCA$ as in the second part of Theorem~\ref{thm:introthm}, and the statements then need to be phrased in terms of commutative diagrams expressing the compatibility of the original representation of the algebra and the representation of the centralizer algebra. The theorems in Section~\ref{sec:modulesovercentralizeralgebrasgeneralcase} contain such results, including upper bounds for the norms of the various maps. These results are valid for non-degenerate modules which are not necessarily faithful. In fact, the existence of a faithful non-degenerate module is equivalent to the injectivity of the natural homomorphism of $A$ into $\LCA$, see Proposition~\ref{prop:injectivity}.

When formulating our results, we have attempted to give as precise statements as possible, under minimal hypotheses. Following the statement for general Banach spaces, we have systematically also covered the ordered and Hilbert contexts. All this detailed information makes the statements rather long, but this seemed unavoidable.

We have also covered the case of non-degenerate right modules over a normed algebra with a bounded right approximate identity. We emphasize that one does \emph{not} obtain the results for the right-sided case from the left-sided case by simply replacing ``left'' with ``right'', and ``homomorphism'' with ``anti-homomorphism''. As an example, the canonical image of $A$ in $\LCA$ is a left ideal, but the canonical image in the right centralizer algebra $\RCA$ is not, in general, a right ideal: it is a left ideal. Furthermore, a left module over $A$ becomes a left module over both $\LCA$ and $\DCA$, but a right module over $A$ becomes a left (not: right) module over $\RCA$ and a right module over $\DCA$. To continue, the right-sided analogue of the embedding result of $\LCA$ and $\DCA$ in Theorem~\ref{thm:introthm} is that an anti-embedding of $A$ into $\mathcal B(X)$ yields an embedding (not: anti-embedding) of $\RCA$ as the left (not: right) normalizer of the image, and an anti-embedding of $\DCA$ as the normalizer of the image. The ``obvious'' adaptations of the results for the left-sided case to the right-sided case are therefore not the correct ones, and, although it adds to the length of the paper, it is for this reason that we felt it would be a disservice to the reader not to include the precise statements for the right-sided case in full. Thus Theorems~\ref{thm:leftcentralizers} and \ref{thm:leftembeddings}, and Corollary~\ref{cor:leftisomorphiccopies}, are concerned with the left-sided case, and Theorems~\ref{thm:rightcentralizers} and \ref{thm:rightembeddings}, and Corollary~\ref{cor:rightisomorphiccopies}, are concerned with the right-sided case. Theorems~\ref{thm:basicthm} and \ref{thm:doublecentralizers}, and Proposition~\ref{prop:injectivity}, are concerned with both cases. Naturally, the proofs for the right-sided case have been omitted, as they are completely similar to the left-sided case.

Although the lack of symmetry between the left-sided and the right-sided case may come as a surprise, there is an underlying reason for it: the standard terminology for algebras of linear maps has a left bias. One almost always --- and certainly always in this paper --- considers $A$ to be a left module over $\mathcal B(A)$, and $X$ to be a left module over $\mathcal B(X)$. This asymmetry, baked into the standard terminology, is what causes the ``discrepancies" later on. If, for an algebra with a bounded right approximate identity and right $A$-modules, one would use the opposite algebras of $\mathcal B(A)$ and $\mathcal B(X)$, then the symmetry in the statements would be restored. We felt, however, that using such formulation would be counterproductive. The whole phenomenon becomes perhaps most obvious in Corollary~\ref{cor:leftisometriccopies}, where tradition almost seems to oppose the mere idea of formulating a right-sided version. The authors, at least, were in this case content with only the left-sided result.

\medskip

This paper is organized as follows.

In Section~\ref{sec:basicterminologyandremarks} we introduce the basic terminology. This is standard, but including it makes the paper self-contained and also gives the opportunity to be precise about conventions concerning unitality, etc. We also include some remarks on a largest non-degenerate submodule and preservation of the set of invariant closed subspaces and of the set of intertwining operators.

Section~\ref{sec:extending from ideals} contains the basic result about extending a representation from an ideal to the algebra.

Section~\ref{sec:modulesovercentralizeralgebrasgeneralcase} starts with collecting some material on centralizer algebras, and then proceeds, in the general setting of non-degenerate Banach modules, to develop the results about the existence and properties of representations of these centralizers algebras which are compatible with representations of the original algebra.

In Section~\ref{sec:modulesovercentralizeralgebrasfaithfulcase} the results of Section~\ref{sec:modulesovercentralizeralgebrasgeneralcase} are strengthened when the module is faithful. If the algebra embeds, then so do the appropriate centralizer algebras.

\section{Basic terminology and preliminary remarks}\label{sec:basicterminologyandremarks}

We start by recalling some standard terminology and introducing notation.

\medskip

Assume that $A$ is a normed algebra over the field $\mathbb F$, where $\mathbb F$ is either $\mathbb R$ or $\mathbb C$. We do not require that $A$ is unital, nor that a possible identity element has norm $1$, but only that the norm is submultiplicative.

Suppose that $X$ is a normed space over $\mathbb F$ and that $\pi_l: A\to \mathcal B(X)$ is a bounded algebra homomorphism from $A$ into the algebra $\mathcal B(X)$ of bounded linear operators on $X$, thus providing $X$ with the structure of a normed left $A$-module. We do not assume that $\pi_l$ is unital if $A$ has an identity element. If the span of the elements $\pi_l(a)x$, for $a\in A$ and $x \in X$, is dense in $X$, then $X$ is said to be a non-degenerate normed left $A$-module, the homomorphism $\pi_l$ being understood. Similarly, if $\pi_r: A\to \mathcal B(X)$ is a bounded algebra anti-homomorphism, so that $X$ is a normed right $A$-module, and if the span of the elements $\pi_r(a)x$, for $a\in A$ and $x \in X$, is dense in $X$, then $X$ is said to be a non-degenerate normed right $A$-module. If $\pi_l$ and $\pi_r$ are a bounded algebra homomorphism, resp.\ a bounded algebra anti-homomorphism, such that $\pi_l(a_1)$ and $\pi_r(a_2)$ commute, for all $a_1, \,a_2\in A$, then $X$ is a normed $A$-bimodule, which is called a non-degenerate normed $A$-bimodule if the span of the elements $\pi_l(a_1)\pi_r(a_2)x$, for $x\in X$ and $a_1,a_2\in A$, is dense in $X$. The latter density is equivalent to $X$ being both a non-degenerate normed left $A$-module and a non-degenerate normed right $A$-module.

If $(T_i)_{i\in I}$ is a net in $\mathcal B(X)$ which converges in the strong operator topology to $T\in\mathcal B(X)$, i.e., if $\lim_i T_i x=Tx$ for all $x\in X$, then we will write $T=\textup{SOT-}\lim_i T_i$.

If $m>0$, then an $m$-bounded approximate left identity for $A$ is a net $(e_i)_{i\in I}$ in $A$, such that $\Vert e_i\Vert\leq m$, for all $i\in I$, and $\lim_i\Vert e_i a-a\Vert=0$, for all $a$ in $A$. Similarly one defines an $m$-bounded approximate right identity and an $m$-bounded two-sided approximate identity and one has obvious notions of bounded left, right, and two-sided approximate identities.

If $V$ is a vector space over $\mathbb F$, then a cone in $V$ is a non-empty subset $C$ such that $\lambda_1 c_1+\lambda_2 c_2\in C$ whenever $\lambda_1,\lambda_2\geq 0$ and $c_1,c_2\in W$. Declaring that $x\geq y$ whenever $x-y\in C$ introduces an ordering in $V$ with the usual properties. Note that, as in, e.g., \cite{Jameson}, we do not assume the properness $C\cap(-C)=\{0\}$ of the positive cone $C$, so that the relation $\geq$ need not be anti-symmetric. A map $\mu:V_1\to V_2$ between two ordered vector spaces is called positive whenever, for all $v_1\in V_1$, $v_1\geq 0$, implies $\mu(v_1)\geq 0$.

An ordered algebra is an algebra which is an ordered vector space with the additional property that, for all $a_1,a_2\in A$, $a_1,a_2\geq 0$ implies $a_1 a_2\geq 0$. If $X$ is an ordered normed space then the set of positive operators in $\mathcal B(X)$ is a cone, so that $\mathcal B(X)$ becomes an ordered normed algebra. If $A$ is an ordered algebra and $X$ is an ordered normed space which is a normed left $A$-module via $\pi_l$, we say that it is an ordered normed left $A$-module if $\pi_l$ is positive, i.e., when positive elements of $A$ act as positive operators in $X$. The analogous right-sided notions is obvious.

If $\mathbb F=\mathbb C$, an involution on an algebra $A$ is a map ${^*}:A\to A$ which is a conjugate linear anti-homomorphism of order 2; if $\mathbb F=\mathbb R$, an involution is a linear anti-homomorphism of order 2. We note explicitly that, when $A$ is normed, an involution is not required to be bounded. If $A$ and $B$ are two involutive algebras, then a map $\mu: A\to B$ is called involutive if $\mu(a^*)=\mu(a)^*$, for all $a\in A$.

\begin{remark}\label{rem:nondegeneratesubmodule}
Suppose $X$ is a normed left $A$-module. If $A$ has a bounded approximate left identity, then there exists a largest non-degenerate normed left $A$-submodule $X_{\textup{nd}}$ of $X$. Indeed, let $X_{\textup{nd}}$ be the closed linear span of the elements $\pi_l(a)x$, for $a$ in $A$ and $x$ in $X$. Surely any non-degenerate normed left $A$-submodule is contained in $X_{\textup{nd}}$. Moreover,  if $(e_i)_{i\in I}$ is a bounded approximate left identity in $A$, then $\pi_l(e_i)_{i\in I}$ is a norm bounded subset of $\mathcal B(X)$ and using this one sees easily that $\textup{SOT-}\lim_i \pi_l(e_i)\upharpoonright_{X_{\textup{nd}}}=\textup{id}_{X_{\textup{nd}}}$. In particular, $X_{\textup{nd}}$ is a non-degenerate normed left $A$-submodule as required. There are obvious right-sided and two-sided versions  for this. In the results below we will repeatedly encounter the assumption that $X$ is a non-degenerate normed $A$-module and from the present discussion we see that we can always pass from $X$ to the largest left, right or two-sided submodule satisfying this hypothesis.
\end{remark}

\begin{remark}\label{rem:samesubspacesetc}
In the subsequent sections, new representations will repeatedly be defined as SOT-limits of given ones. This implies that, under the new module structure, the set of closed invariant subspaces will remain unchanged. It also implies that the bounded intertwining operators between two such new representations will coincide with those for the two original representations. For reasons of space we make this general observation here once and for all, rather than add it on every separate occasion.
\end{remark}

\section{Extending from ideals}\label{sec:extending from ideals}

In this section we establish the basic theorem concerning extension of module structures initially defined for ideals. It is not necessary that the algebras are complete or that the ideals are closed, but we \emph{do} need that the spaces they act on are complete. In the next section we will apply the results in the context of centralizer algebras.

\newpage

\begin{theorem}\label{thm:basicthm}
Let $A$ be a normed algebra, and let $X$ be a Banach space.
\begin{enumerate}
\item If $J$ is a left ideal in $A$ containing an $m$-bounded approximate left identity for itself, and if the homomorphism $\pi_l:J\to X$ provides $X$ with the structure of a non-degenerate normed left $J$-module, then there exists a unique homomorphism $\overline \pi_l:A\to\mathcal B(X)$ extending $\pi_l$. This extension is, in fact, bounded with $\Vert\overline\pi_l\Vert\leq m \Vert\pi_l\Vert$, so that $X$ becomes a non-degenerate normed left $A$-module.
For $a\in A$, $\overline\pi_l(a)\in\mathcal B(X)$ is the unique bounded operator such that $\overline\pi_l(a)\pi_l(j)=\pi_l(aj)$, for all $j\in J$.  If $(e_i)_{i\in I}$ is any bounded approximate left identity in $J$ for itself, then $\textup{SOT-}\lim_i\pi_l(e_i)=\textup{id}_X$ and, for all $a\in A$, $\overline\pi_l(a)=\textup{SOT-}\lim_i \pi_l(ae_i)$. If $a\in A$ acts from the left on $J$ as the identity, then $\overline\pi_l(a)=\textup{id}_X$.

If, in addition, $A$ is an ordered algebra, $X$ is an ordered space with closed positive cone, $J$ contains a positive bounded approximate left identity for itself, and $\pi_l$ is positive, then $\overline\pi_l$ is positive.

Alternatively, if, in addition, $A$ has an involution which leaves $J$ invariant (so that $J$ is a two-sided ideal), $X$ is a Hilbert space, and $\pi_l$ is involutive, then $\overline\pi_l$ is involutive.

\item If $J$ is a right ideal in $A$ containing an $m$-bounded approximate right identity for itself, and if the anti-homomorphism $\pi_r:J\to X$ provides $X$ with the structure of a non-degenerate normed right $J$-module, then there exists a unique  anti-homomorphism $\overline \pi_r:A\to\mathcal B(X)$ extending $\pi_r$. This extension is, in fact, bounded with $\Vert\overline\pi_r\Vert\leq m \Vert\pi_r\Vert$, so that $X$ becomes a non-degenerate normed right $A$-module.
For $a\in A$, $\overline \pi_r(a)\in\mathcal B(X)$ is the unique bounded operator such that $\overline\pi_r(a)\pi_r(j)=\pi_r(ja)$, for all $j\in J$. If $(e_i)_{i\in I}$ is any bounded approximate right identity in $J$ for itself, then $\textup{SOT-}\lim_i\pi_r(e_i)=\textup{id}_X$ and, for all $a\in A$, $\overline\pi_r(a)=\textup{SOT-}\lim_i \pi_r(e_ia)$. If $a\in A$ acts from the right on $J$ as the identity, then $\overline\pi_r(a)=\textup{id}_X$.

If, in addition, $A$ is an ordered algebra, $X$ is an ordered space with closed positive cone, $J$ contains a positive bounded approximate right identity for itself, and $\pi_r$ is positive, then $\overline\pi_r$ is positive.

Alternatively, if, in addition $A$ has an involution which leaves $J$ invariant (so that $J$ is a two-sided ideal), $X$ is a Hilbert space, and $\pi_r$ is involutive, then $\overline\pi_r$ is involutive.

\item If $J$ is a two-sided ideal in $A$ containing both a bounded approximate left identity and a bounded approximate right identity for itself, and if the homomorphism $\pi_l:J\to\mathcal B(X)$ and the anti-homomorphism $\pi_r:J\to\mathcal B(X)$ provide $X$ with the structure of a non-degenerate normed $J$-bimodule, then the maps $\overline\pi_l$ and $\overline\pi_r$ from the first two parts make $X$ into a non-degenerate normed $A$-bimodule.
    \end{enumerate}
\end{theorem}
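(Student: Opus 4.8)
The plan is to take for granted the constructions and properties of $\overline\pi_l$ and $\overline\pi_r$ furnished by parts (1) and (2): $\overline\pi_l$ is a bounded homomorphism making $X$ a non-degenerate normed left $A$-module, $\overline\pi_r$ is a bounded anti-homomorphism making $X$ a non-degenerate normed right $A$-module, and they are given by the SOT-limit formulas $\overline\pi_l(a)=\textup{SOT-}\lim_i\pi_l(ae_i)$ and $\overline\pi_r(a)=\textup{SOT-}\lim_j\pi_r(f_ja)$, where $(e_i)_{i\in I}$ and $(f_j)_{j\in J}$ are a bounded approximate left, resp.\ right, identity in $J$ for itself. Granting all this, the only genuinely new thing to establish is the commutation relation $\overline\pi_l(a_1)\overline\pi_r(a_2)=\overline\pi_r(a_2)\overline\pi_l(a_1)$ for all $a_1,a_2\in A$. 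Once this holds, $X$ is a normed $A$-bimodule, and its non-degeneracy as a bimodule is automatic: as recorded in Section~\ref{sec:basicterminologyandremarks}, bimodule non-degeneracy is equivalent to non-degeneracy as both a left and a right module, both of which are already provided by the first two parts.

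I would establish the commutation in two stages, arranged so as never to invoke joint SOT-continuity of multiplication (which fails), but only the facts that a fixed bounded operator is SOT-continuous and commutes with SOT-limits of bounded nets. First I would prove the partial commutation $\overline\pi_l(a)\pi_r(j)=\pi_r(j)\overline\pi_l(a)$ for every $a\in A$ and every $j\in J$. Since $J$ is a left ideal we have $ae_i\in J$, and since $X$ is a non-degenerate $J$-bimodule the operators $\pi_l(ae_i)$ and $\pi_r(j)$ commute for each $i$. Applying the SOT-limit defining $\overline\pi_l(a)$ to the vector $\pi_r(j)x$ and using the continuity of the single fixed operator $\pi_r(j)$ to carry it across the limit gives, for every $x\in X$,
\[
\overline\pi_l(a)\pi_r(j)x=\lim_i\pi_l(ae_i)\pi_r(j)x=\lim_i\pi_r(j)\pi_l(ae_i)x=\pi_r(j)\overline\pi_l(a)x.
\]

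In the second stage I would upgrade this to full commutation. Fix $a_1,a_2\in A$. Since $J$ is a right ideal, $f_ja_2\in J$, so the first stage yields $\overline\pi_l(a_1)\pi_r(f_ja_2)=\pi_r(f_ja_2)\overline\pi_l(a_1)$ for every $j$. Using that $\overline\pi_l(a_1)$ is bounded to pull it inside the SOT-limit defining $\overline\pi_r(a_2)$, and then recognizing that the same limit computes $\overline\pi_r(a_2)$ on the vector $\overline\pi_l(a_1)x$, I obtain, for every $x\in X$,
\[
\overline\pi_l(a_1)\overline\pi_r(a_2)x=\lim_j\overline\pi_l(a_1)\pi_r(f_ja_2)x=\lim_j\pi_r(f_ja_2)\overline\pi_l(a_1)x=\overline\pi_r(a_2)\overline\pi_l(a_1)x,
\]
which is exactly the required commutation.

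The main point demanding care — indeed the only real obstacle — is precisely this handling of the strong operator topology: because SOT-multiplication is separately but not jointly continuous, the argument must be staged so that at each limit step a single fixed bounded operator (first $\pi_r(j)$, then $\overline\pi_l(a_1)$) is the one carried across the limit, which is legitimate. The uniform norm bounds on the nets $\pi_l(ae_i)$ and $\pi_r(f_ja_2)$ supplied by parts (1) and (2) guarantee that all the limits in question exist and that these manipulations are valid. With commutation in hand, the normed $A$-bimodule structure and its non-degeneracy follow immediately as indicated above, completing the proof.
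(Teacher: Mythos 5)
Your argument for the third part is correct, and it is in essence exactly the paper's: the paper disposes of part (3) in one sentence by noting that $\overline\pi_l(a_1)$ lies in the strong operator closure of $\pi_l(J)$ and $\overline\pi_r(a_2)$ in that of $\pi_r(J)$, hence they commute; unwinding why SOT-closures of commuting sets commute (commutants are SOT-closed, because a \emph{fixed} bounded operator passes through SOT-limits) is precisely your two-stage argument, and your care about separate versus joint SOT-continuity is well placed. Your reduction of bimodule non-degeneracy to left and right non-degeneracy also matches the convention recorded in Section~\ref{sec:basicterminologyandremarks}.

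The decisive problem is that the statement to be proved is all of Theorem~\ref{thm:basicthm}, and you prove only part (3). Everything you ``take for granted'' --- the existence of $\overline\pi_l$ and $\overline\pi_r$, their uniqueness, the bound $\Vert\overline\pi_l\Vert\leq m\Vert\pi_l\Vert$, the characterization via $\overline\pi_l(a)\pi_l(j)=\pi_l(aj)$, the SOT-limit formulas on which your own argument relies, and the ordered and involutive refinements --- is the actual content of the theorem, and none of it is established in your proposal. The paper's real work is there: for fixed $a\in A$ one uses the uniform bound $\Vert\pi_l(ae_i)\Vert\leq m\Vert\pi_l\Vert\Vert a\Vert$ and a $3\epsilon$-argument to show that $\{x\in X : (\pi_l(ae_i)x)_{i\in I}\textup{ is a Cauchy net}\}$ is a closed subspace containing every $\pi_l(j)y$, hence equal to $X$ by non-degeneracy; one then \emph{defines} $\overline\pi_l(a)x=\lim_i\pi_l(ae_i)x$, verifies that this extends $\pi_l$, satisfies $\overline\pi_l(a)\pi_l(j)=\pi_l(aj)$ (whence uniqueness, again by non-degeneracy), is multiplicative, is positive when the cone of $X$ is closed, and is involutive in the Hilbert case by a computation against inner products. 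Since your proposal never constructs the operators whose commutation it verifies, it does not constitute a proof of the stated theorem; it is a correct proof of its third part only.
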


\begin{proof}
As to the first part, let $(e_i)_{i\in I}$ be an $m$-bounded approximate left identity in $J$ for itself. Fix $a\in A$. Now $\Vert\pi_l(ae_i)\Vert\leq m \Vert\pi_l\Vert \Vert a\Vert$, for all $i\in I$, and using this uniform bound a $3\epsilon$-argument easily implies that the set $\{x\in X : (\pi_l(ae_i)x)_{i\in I} \textup{ is a Cauchy net}\}$ is a closed linear subspace. Since $(e_i)_{i\in I}$ is an approximate left identity for $J$, this set clearly contains all elements of the form $\pi_l(j)y$ for $y\in X$ and $j\in J$, hence is equal to $X$ by the non-degeneracy of $X$ as a normed left $J$-module. This enables us to define, for all $x\in X$,
\begin{equation}\label{eq:basiceq}
\overline\pi_l(a)x=\lim_i \pi_l(ae_i)x.
\end{equation}
It is obvious that $\overline\pi_l(a)\in \mathcal B(X)$ and that $\Vert\overline\pi_l(a)\Vert\leq m \Vert\pi_l\Vert \Vert a\Vert$. Using once more the fact that $(e_i)_{i\in I}$ is an approximate left identity for $J$, as well as the non-degeneracy of $X$ as a normed left $J$-module, one sees that $\overline\pi_l$ extends $\pi_l$ and also that $\overline\pi_l(a)\pi_l(j)=\pi_l(aj)$, for $a\in A$ and $j\in J$. By the non-degeneracy as a normed left $J$-module, the latter equation determines $\overline\pi_l(a)$ uniquely as an element of $\mathcal B(X)$. Since, for $a,b\in A$ and $j\in J$, one has $\overline\pi_l(a)\overline\pi_l(b)\pi_l(j)=\overline\pi_l(a)\pi_l(bj)=\pi_l(abj)=\overline\pi_l(ab)\pi_l(j)$, we conclude from the non-degeneracy as a normed left $J$-module that $\overline\pi_l$ is a homomorphism. This establishes the statements in the first part of the proposition regarding general $X$.

In the involutive context, for $x,y\in H$ and $a\in A, j\in J$, we compute that
\begin{align*}\langle\overline\pi_l(a)\pi_l(j)x,y\rangle&=\langle \pi_l(aj)x,y\rangle=\langle x, \pi_l(aj)^*y\rangle=\langle x,\pi_l((aj)^*)y\rangle\\
&=\langle x,\pi_l(j^*a^*)y\rangle=\langle x,\overline\pi_l(j^*a^*)y\rangle=\langle x,\overline\pi_l(j^*)\overline\pi_l(a^*)y\rangle\\
&=\langle x,\pi_l(j^*)\overline\pi_l(a^*)y\rangle=\langle \pi_l(j)x,\overline\pi_l(a^*)y\rangle=\langle\overline\pi_l(a^*)^*\pi_l(j)x,y\rangle.
\end{align*}
Hence, by the non-degeneracy of $X$ as a normed left $J$-module again, $\overline\pi_l(a)=\overline\pi_l(a^*)^*$, so that $\overline\pi_l$ is involutive.

The statement in the ordered context is clear from \eqref{eq:basiceq} and the fact that the positive cone is closed.

The proof of the first part is now complete and the proof of the second part is similar. The third part follows from the first two parts since, for all $a_1\in A$, resp.\ for all $a_2\in A$, the operator $\overline\pi_l(a_1)$, resp.\ $\overline\pi_r(a_2)$, is an element of the strong operator closure of $\pi_l(J)$, resp.\ $\pi_r(J)$, and hence these operators commute.
\end{proof}

\begin{remark}\quad
\begin{enumerate}
\item Note that in the first and second part it is not required that the involution is bounded.
\item If $A$ as in Theorem~\ref{thm:basicthm} is a Banach algebra, then in the first part one does not have to require that $\overline\pi(a)$ is a bounded linear operator, because as a consequence of the Cohen-Hewitt factorization theorem \cite[p.\ 61]{BonsallDuncan}, \cite[Theorem~2.9.24]{Dales} the requirement $\overline\pi_l(a)\pi_l(j)=\pi_r(aj)$, for all $j\in J$, already determines $\overline\pi_r(a)$ as a map from $X$ into itself. Linearity and boundedness are then automatic. A similar remark applies to the second part.
\end{enumerate}
\end{remark}

\section{Module structures for centralizer algebras: general case}\label{sec:modulesovercentralizeralgebrasgeneralcase}

In this section we are concerned with the possibility of finding module structures for centralizer algebras of an algebra $A$ which are compatible with a given module structure for $A$. We cannot directly apply Theorem~\ref{thm:basicthm} because, although $A$ maps canonically onto an ideal in its various centralizer algebras, such maps need not be injective. However, for non-degenerate modules the initial (anti)-representations of $A$ does, in fact, descend to the images in the centralizer algebras, and subsequentely Theorem~\ref{thm:basicthm} can be applied to that situation.

\medskip

We start with the necessary preparations. Suppose $A$ is a normed algebra.

Let $\LCA=\{L\in\mathcal B(A) : L(ab)=L(a)b\textup{ for all }a,b\in A\}$ be the left centralizer algebra of $A$. It is sometimes called the right centralizer algebra, which is perhaps more logical since the operators in $\LCA$ commute with (i.e.: centralize) all right multiplications rather than the left ones, but we adhere to Johnson's choice of terminology in his seminal paper \cite{JohnsonLMS}. Likewise, $\RCA=\{R\in\mathcal B(A) : R(ab)=aR(b)\textup{ for all }a,b\in A\}$ is the right centralizer algebra of $A$. A pair $(L,R)$ with $L\in\LCA$ and $R\in\RCA$ is called a double centralizer if $aL(b)=R(a)b$ for all $a,b\in A$. Clearly $\LCA$ and $\RCA$ are unital closed subalgebras of $\mathcal B(A)$. Defining $(L_1,R_1)\cdot(L_2,R_2)=(L_1\circ L_2, R_2\circ R_1)$ makes $\DCA$ into a unital algebra over $\mathbb F$, which becomes a normed algebra if one puts $\Vert(L,R)\Vert=\max(\Vert L\Vert,\Vert R\Vert)$.

If\ ${^*}:A\to A$ is a bounded involution and $L\in\LCA$, then the map $L^*:A\to A$ defined by $L^*a=(L(a^*))^*$ is a right centralizer. This yields a bounded unital homomorphism ${^*}:\LCA\to\RCA$ (which is conjugate linear if $\mathbb F=\mathbb C$), inverse to the similarly defined bounded unital homomorphism ${^*}:\RCA\to\LCA$ (which is conjugate linear if $\mathbb F=\mathbb C$). Combining these yields a bounded involution ${^*}:\DCA\to\DCA$, defined as $(L,R)^*=(R^*,L^*)$. Thus $\DCA$ is a unital normed algebra with bounded involution.

If $A$ is an ordered algebra, then so are $\LCA$ and $\RCA$. Furthermore, $\DCA$ then also becomes an ordered algebra by defining $(L,R)\in\DCA$ to be positive if $L\geq 0$ and $R\geq 0$.

There is a canonical contractive homomorphism $\lambda: A\to\LCA$, defined by $\lambda(a)b=ab$, for $a,b\in A$. Since $L\circ\lambda(a)=\lambda(L(a))$, for $a\in A$ and $L\in\LCA$, $\lambda(A)$ is a left ideal in $\LCA$. If $A$ has an ($m$-bounded) approximate left identity $(e_i)_{i\in I}$, then $(\lambda(e_i))_{i\in I}$ is an ($m$-bounded) approximate left identity in $\lambda(A)$. If $A$ is an ordered algebra, then $\lambda$ is positive, and positive approximate left identities in $A$ yield positive approximate left identities in $\lambda(A)$.

Likewise, we have a canonical contractive anti-homomorphism $\rho:A\to\RCA$, defined by $\rho(a)b=ba$, for $a,b\in A$, and since $R\circ\rho(a)=\rho(R(a))$, for all $a\in A$ and $R\in\RCA$, the image $\rho(A)$ is a left ideal in $\RCA$. If $A$ has an ($m$-bounded) approximate right identity $(e_i)_{i\in I}$, then $(\rho(e_i))_{i\in I}$ is an ($m$-bounded) approximate left identity in $\rho(A)$. If $A$ is an ordered algebra, then $\rho$ is positive, and positive approximate right identities in $A$ yield positive approximate left identities in $\rho(A)$.

The map $\delta:A\to\DCA$ which is defined, for $a\in A$, by $\delta(a)=(\lambda(a),\rho(a))$, is a contractive homomorphism. For $a\in A$ and $(L,R)\in\DCA$ one computes that $(L,R)(\lambda(a),\rho(a))=(\lambda(L(a)),\rho((L(a)))$ and similarly  $(\lambda(a),\rho(a))(L,R)=(\lambda(R(a)),\rho(R(a)))$; hence the image $\delta(A)$ is a two-sided ideal in $\DCA$. If $A$ has an ($m$-bounded) approximate left, resp.\ right, identity $(e_i)_{i\in I}$, then $(\delta(e_i))_{i\in I}$ is an ($m$-bounded) left, resp.\ right, approximate identity in $\delta(A)$. If $A$ is an ordered algebra, then $\delta$ is positive, and positive left, resp.\ right, approximate identities in $A$ yield positive left, resp.\ right, approximate identities in $\delta(A)$. If $A$ has a bounded involution, then, for $a\in A$, one has $\lambda(a)^*=\rho(a^*)$ and $\rho(a)^*=\lambda(a^*)$. In that case $\delta: A\to\DCA$ is a contractive involutive homomorphism.

Retaining the $L$-part in $(L,R)$ gives a unital contractive homomorphism $\phi_l:\DCA\to\LCA$ which maps $\delta(A)$ onto $\lambda(A)$, and retaining the $R$-part yields a unital contractive anti-homomorphism $\phi_r:\DCA\to\RCA$ which maps $\delta(A)$ onto $\rho(A)$. If $A$ is an ordered algebra, then both $\phi_l:\DCA\to\LCA$ and $\phi_r:\DCA\to\RCA$ are positive.

Suppose that the homomorphism $\pi_l:A\to \mathcal B(X)$ provides the normed space $X$ with the structure of a non-degenerate normed left $A$-module. From the non-degeneracy it is clear that $\pi_l(\Ker\lambda)=0$, hence there is a unique map $\widetilde\pi_l:\lambda(A)\to\mathcal B(X)$ such that
\begin{equation}\label{diag:leftfirst}
\xymatrix
{
A \ar[d]_\lambda \ar[r]^{\pi_l}&\mathcal B(X)\\
\lambda(A)\ar[ru]_{\widetilde\pi_l}}
\end{equation}
is commutative; it is in fact a homomorphism. If $A$ has an $m$-bounded approximate left identity, then $\widetilde\pi_l$ is a bounded homomorphism making $X$ into a non-degenerate normed left $\lambda(A)$-module, and $\Vert\widetilde\pi_l\Vert\leq m\Vert\pi_l\Vert$. To see this, let $(e_i)_{i\in I}$ be such an $m$-bounded approximate left identity. We know from Theorem~\ref{thm:basicthm} that $\textup{SOT-}\lim_i \pi_l(e_i)=\textup{id}_X$. Hence, for $a\in A$ and $x\in X$, one has $\widetilde\pi_l (\lambda(a))x=\pi_l(a)x=\lim_i \pi_l(a)\pi_l(e_i)x=\lim_i\pi_l(ae_i)x=\lim_i\pi_l(\lambda(a)e_i)x$. Since we have $\Vert \pi_l(\lambda(a)e_i)x\Vert\leq \Vert\pi_l\Vert\Vert\lambda(a)\Vert\Vert e_i\Vert\Vert x\Vert \leq m\Vert\pi_l\Vert\Vert\lambda(a)\Vert\Vert x\Vert$, the conclusion follows. If $A$ is an ordered algebra which has a positive bounded approximate left identity, if $X$ is ordered with a closed positive cone and $\pi_l$ is positive, then $\widetilde\pi_l$ is positive. This follows from the equation $\widetilde\pi_l (\lambda(a))x=\lim_i\pi_l(\lambda(a)e_i)x$ derived above.

Likewise, if the anti-homomorphism $\pi_r:A\to\mathcal B(X)$ makes $X$ into a non-degenerate normed right $A$-module, then there is a unique map $\widetilde\pi_r:\rho(A)\to\mathcal B(X)$ such that
\begin{equation}\label{diag:rightfirst}
\xymatrix
{
A\ar[d]_\rho \ar[r]^{\pi_r}&\mathcal B(X)\\
\rho(A)\ar[ru]_{\widetilde\pi_r}}
\end{equation}
is commutative; it is in fact a homomorphism. If $A$ has an $m$-bounded approximate right identity, then $\widetilde\pi_r$ is a bounded homomorphism making $X$ into a non-degenerate normed left $\rho(A)$-module, and $\Vert\widetilde\pi_r\Vert\leq m\Vert\pi_r\Vert$. If $A$ is an ordered algebra which has a positive bounded approximate right identity, if $X$ is ordered with a closed positive cone and $\pi_r$ is positive, then $\widetilde\pi_r$ is positive.

\medskip

We are now in the position to apply Theorem~\ref{thm:basicthm} to centralizer algebras. The left-sided version is as follows.

\newpage

\begin{theorem}\label{thm:leftcentralizers}
Let $A$ be a normed algebra with an $m$-bounded approximate left identity, and let $X$ be a Banach space.

If $\pi_l:A\to\mathcal B(X)$ provides $X$ with the structure of a non-degenerate normed left $A$-module, then there exist a unique map $\widetilde\pi_l$ and a unique homomorphism $\overline\pi_l:\LCA\to\mathcal B(X)$ such that the diagram
\begin{equation}\label{diag:leftsecond}
\xymatrix
{A\ar[r]^{\pi_l}\ar[d]_{\lambda} & \mathcal B(X)\\
\lambda(A)\ar@{^{(}->}[r]_i \ar[ru]_{\widetilde{\pi}_l}&\LCA\ar[u]_{\overline{\pi}_l}}
\end{equation}
is commutative. All maps in the diagram are bounded homomorphisms, and $\overline\pi_l$ is unital. One has $\Vert\lambda\Vert\leq 1$, $\Vert i\Vert=1$ if $\lambda(A)\neq 0$, $\Vert\widetilde\pi_l\Vert\leq m\Vert\pi_l\Vert$, and $\Vert\overline\pi_l\Vert\leq m\Vert\pi_l\Vert$. In particular, $X$ becomes a non-degenerate normed left $\LCA$-module.

The image $\pi_l(A)$ is a left ideal in $\overline\pi_l(\LCA)$. In fact, if $L\in\LCA$ and $a\in A$, then $\overline\pi_l(L)\pi_l(a)=\pi_l(L(a))$.

If $(e_i)_{i\in I}$ is any bounded approximate left identity for $A$, then $\textup{SOT-}\lim_i\pi_l(e_i)=\textup{id}_X$, and, if $L\in\LCA$, then $\overline\pi_l(L)=\textup{SOT-}\lim_i \pi_l(L(e_i))$.

If, in addition, $A$ is an ordered algebra with a positive bounded approximate left identity, if $X$ is ordered with a closed positive cone, and if $\pi_l$ is positive, then all algebras in the diagram are ordered and all maps are positive.
\end{theorem}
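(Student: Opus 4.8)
The plan is to apply the left-sided extension result, Theorem~\ref{thm:basicthm}(1), to the normed algebra $\LCA$, its left ideal $\lambda(A)$, and the representation $\widetilde\pi_l$ of $\lambda(A)$. Everything needed to do so has already been assembled in the discussion preceding the theorem: the map $\widetilde\pi_l:\lambda(A)\to\mathcal B(X)$ exists and is the unique homomorphism with $\widetilde\pi_l\circ\lambda=\pi_l$, it is bounded with $\Vert\widetilde\pi_l\Vert\le m\Vert\pi_l\Vert$ and makes $X$ into a non-degenerate normed left $\lambda(A)$-module, the image $\lambda(A)$ is a left ideal in $\LCA$, and $(\lambda(e_i))_{i\in I}$ is an $m$-bounded approximate left identity in $\lambda(A)$ for itself whenever $(e_i)_{i\in I}$ is one in $A$. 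Theorem~\ref{thm:basicthm}(1) then produces a unique bounded homomorphism $\overline\pi_l:\LCA\to\mathcal B(X)$ extending $\widetilde\pi_l$, rendering the lower triangle of \eqref{diag:leftsecond}---and hence, together with $\widetilde\pi_l\circ\lambda=\pi_l$, the whole diagram---commutative, and turning $X$ into a non-degenerate normed left $\LCA$-module. Uniqueness of both $\widetilde\pi_l$ and $\overline\pi_l$ subject to commutativity follows since $\lambda$ is surjective onto $\lambda(A)$ and Theorem~\ref{thm:basicthm} delivers the extension uniquely. That $\lambda$ and $i$ are homomorphisms is immediate, so all maps in the diagram are bounded homomorphisms.

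Next I would read off the remaining algebraic assertions from the two identities $\widetilde\pi_l\circ\lambda=\pi_l$ and $L\circ\lambda(a)=\lambda(L(a))$ (the latter recorded in the preparation). Unitality of $\overline\pi_l$ follows because the identity of $\LCA$ is $\textup{id}_A$, which acts as the identity on the left of $\lambda(A)$, so the final clause of Theorem~\ref{thm:basicthm}(1) gives $\overline\pi_l(\textup{id}_A)=\textup{id}_X$. For the ideal statement, Theorem~\ref{thm:basicthm}(1) yields $\overline\pi_l(L)\widetilde\pi_l(\lambda(a))=\widetilde\pi_l(L\circ\lambda(a))$ for $L\in\LCA$ and $a\in A$; substituting $\widetilde\pi_l(\lambda(a))=\pi_l(a)$ and $L\circ\lambda(a)=\lambda(L(a))$ turns this into $\overline\pi_l(L)\pi_l(a)=\pi_l(L(a))$, which both proves the displayed formula and shows that $\pi_l(A)$ is a left ideal in $\overline\pi_l(\LCA)$.

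For the norm estimates, $\Vert\lambda\Vert\le 1$ and $\Vert\widetilde\pi_l\Vert\le m\Vert\pi_l\Vert$ are already available, and $\Vert i\Vert=1$ when $\lambda(A)\neq 0$ because $\lambda(A)$ carries the norm induced from $\LCA$. The one point deserving care is the sharp bound $\Vert\overline\pi_l\Vert\le m\Vert\pi_l\Vert$: chaining the estimates of Theorem~\ref{thm:basicthm} naively would only give $m\Vert\widetilde\pi_l\Vert\le m^2\Vert\pi_l\Vert$. The remedy is to use the explicit SOT-formula instead. Fixing any bounded approximate left identity $(e_i)_{i\in I}$ for $A$, Theorem~\ref{thm:basicthm} with $J=A$ gives $\textup{SOT-}\lim_i\pi_l(e_i)=\textup{id}_X$, and the same theorem applied to $\lambda(A)\subset\LCA$ gives $\overline\pi_l(L)=\textup{SOT-}\lim_i\widetilde\pi_l(L\circ\lambda(e_i))=\textup{SOT-}\lim_i\pi_l(L(e_i))$, using $L\circ\lambda(e_i)=\lambda(L(e_i))$ and $\widetilde\pi_l\circ\lambda=\pi_l$. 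Since $\Vert\pi_l(L(e_i))\Vert\le\Vert\pi_l\Vert\Vert L\Vert\Vert e_i\Vert\le m\Vert\pi_l\Vert\Vert L\Vert$, passing to the SOT-limit yields $\Vert\overline\pi_l(L)\Vert\le m\Vert\pi_l\Vert\Vert L\Vert$, which is the asserted bound; this is precisely the gain from estimating $\pi_l$ on $L(e_i)\in A$ directly rather than routing through $\widetilde\pi_l$ on $\lambda(A)$.

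Finally, the ordered case is handled by invoking the ordered part of Theorem~\ref{thm:basicthm}(1) for the same data: the preparation records that $\LCA$ is an ordered algebra, that $\lambda$ and hence $i$ are positive while $\lambda(A)$ inherits the order of $\LCA$, that $\widetilde\pi_l$ is positive, and that $(\lambda(e_i))$ is a positive approximate left identity in $\lambda(A)$ when $(e_i)$ is positive in $A$; with $X$ ordered by a closed cone and $\pi_l$ positive, Theorem~\ref{thm:basicthm} concludes that $\overline\pi_l$ is positive, so that all algebras in \eqref{diag:leftsecond} are ordered and all maps positive. The main obstacle throughout is the single non-formal estimate above; everything else is a careful transport of the conclusions of Theorem~\ref{thm:basicthm} across the identifications $\widetilde\pi_l\circ\lambda=\pi_l$ and $L\circ\lambda(a)=\lambda(L(a))$.
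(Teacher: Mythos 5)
Your proposal is correct and follows essentially the same route as the paper's proof: both apply Theorem~\ref{thm:basicthm}(1) to the left ideal $\lambda(A)$ of $\LCA$ with the representation $\widetilde\pi_l$ supplied by the discussion around diagram~\eqref{diag:leftfirst}, derive the identity $\overline\pi_l(L)\pi_l(a)=\pi_l(L(a))$ and the SOT-formula $\overline\pi_l(L)=\textup{SOT-}\lim_i\pi_l(L(e_i))$ from $L\circ\lambda(a)=\lambda(L(a))$, and recover the sharp bound $\Vert\overline\pi_l\Vert\leq m\Vert\pi_l\Vert$ from that formula rather than from the naive chaining which only gives $m^2\Vert\pi_l\Vert$. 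The only difference is cosmetic: you spell out unitality via the last clause of Theorem~\ref{thm:basicthm}(1), which the paper leaves under ``the remaining statements are either clear or follow from the first part of Theorem~\ref{thm:basicthm}.''
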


\begin{proof}
We know from the discussion of centralizer algebras that $\lambda(A)$ is a left ideal in $\LCA$, and that $(\lambda(e_i))_{i\in I}$ is an ($m$-bounded)(positive) approximate left identity in $\lambda(A)$ if $(e_i)_{i\in I}$ is an ($m$-bounded) (positive) approximate left identity in $A$. Furthermore, in the results surrounding diagram~\eqref{diag:leftfirst} we have already observed that $\widetilde\pi_l:\lambda(A)\to\mathcal B(X)$ is the unique map making the upper triangle commutative, and that it is in fact a bounded homomorphism with $\Vert\widetilde\pi_l\Vert\leq m\Vert\pi_l\Vert$.

Hence the first part of Theorem~\ref{thm:basicthm} applies to this situation and it provides the unique homomorphism $\overline\pi_l$ making the lower triangle commutative. It also yields that, for any bounded approximate left identity $(e_i)_{i\in I}$ in $A$, and, for all $L\in\LCA$ and $x\in X$, $\overline\pi_l(L)x=\lim_i\widetilde\pi_l(L\circ\lambda(e_i))x=\lim_i\widetilde\pi_l(\lambda(L(e_i)))x=\lim_i\pi_l(L(e_i))x$. It also shows that $\overline\pi_l$ is bounded and that $\Vert\overline\pi_l\Vert\leq m\Vert\widetilde\pi_l\Vert\leq m^2\Vert\pi_r\Vert$, but this is not optimal: choosing an $m$-bounded approximate left identity $(e_i)_{i\in I}$ for $A$ one sees immediately from $\overline\pi_l(L)x=\lim_i \pi_l(L(e_i))x$ that, in fact, $\Vert\overline\pi_l\Vert\leq m\Vert\pi_l\Vert$. Since $\lambda(A)$ is a left ideal in $\LCA$, the same holds for the images $\overline\pi_l(\lambda(A))=\pi_l(A)$ and $\overline\pi_l(\LCA)$. In fact, for $L\in\LCA$ and $a\in A$, we have $\overline\pi_l(L)\pi_l(a)=\overline\pi_l(L)\overline\pi_l(\lambda(a))=\overline\pi_l(L \circ\lambda(a))=\overline\pi_l(\lambda(L(a)))=\pi_l(L(a))$.

The remaining statements are either clear or follow from the first part of Theorem~\ref{thm:basicthm}.
\end{proof}

\begin{remark}\label{rem:leftremark}
It is also true that $\overline\pi_l:\LCA\to\mathcal B(X)$ is the unique homomorphism making the square in diagram \eqref{diag:leftsecond} commutative. Indeed, for a homomorphism with this property one sees that, for $a\in A$, $x\in X$, and $L\in\LCA$, $\overline\pi_l(L)\pi_l(a)x=\overline\pi_l(L)\overline\pi_l(\lambda(a))x=\overline\pi_l(L\circ\lambda(a))x=\overline\pi_l(\lambda(L(a)))x=\pi_l(L(a))x$. Thus $\overline\pi_l(L)$ is uniquely determined, as a consequence of the non-degeneracy of $X$.
\end{remark}

The right-sided version of Theorem~\ref{thm:leftcentralizers} is not obtained by replacing left with right and homomorphism with anti-homomorphism. Instead, it reads as follows.

\begin{theorem}\label{thm:rightcentralizers}
Let $A$ be a normed algebra with an $m$-bounded approximate right identity, and let $X$ be a Banach space.

If $\pi_r:A\to\mathcal B(X)$ provides $X$ with the structure of a non-degenerate normed right $A$-module, then there exist a unique map $\widetilde\pi_r$ and a unique homomorphism $\overline\pi_r:\RCA\to\mathcal B(X)$ such that the diagram
\begin{equation}\label{diag:rightsecond}
\xymatrix
{A\ar[r]^{\pi_r}\ar[d]_{\rho} & \mathcal B(X)\\
\rho(A)\ar@{^{(}->}[r]_i \ar[ru]_{\widetilde{\pi}_r}&\RCA\ar[u]_{\overline{\pi}_r}}
\end{equation}
is commutative. Then $\pi_r$ and $\rho$ are bounded anti-homomorphisms, $\widetilde\pi_r$, $i$ and $\overline\pi_r$ are bounded homomorphisms, and $\overline\pi_r$ is unital. One has $\Vert\rho\Vert\leq 1$, $\Vert i\Vert=1$ if $\rho(A)\neq 0$, $\Vert\widetilde\pi_r\Vert\leq m\Vert\pi_r\Vert$, and $\Vert\overline\pi_r\Vert\leq m\Vert\pi_r\Vert$. In particular, $X$ becomes a non-degenerate normed left $\RCA$-module.

The image $\pi_r(A)$ is a left ideal in $\overline\pi_r(\RCA)$. In fact, if $R\in\RCA$ and $a\in A$, then $\overline\pi_r(R)\pi_r(a)=\pi_r(R(a))$.

If $(e_i)_{i\in I}$ is any bounded approximate right identity for $A$, then $\textup{SOT-}\lim_i\pi_r(e_i)\!=\textup{id}_X$, and, if $R\in\RCA$, then $\overline\pi_r(R)=\textup{SOT-}\lim_i \pi_r(R(e_i))$.

If, in addition, $A$ is an ordered algebra with a positive bounded approximate right identity, if $X$ is ordered with a closed positive cone, and if $\pi_r$ is positive, then all algebras in the diagram are ordered and all maps are positive.
\end{theorem}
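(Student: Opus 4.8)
The plan is to mirror the proof of Theorem~\ref{thm:leftcentralizers} verbatim, the crucial point being that, despite the ``right'' labelling, one applies the \emph{first} (left-sided) part of Theorem~\ref{thm:basicthm}. In the preparatory discussion surrounding diagram~\eqref{diag:rightfirst} it was already established that $\rho(A)$ is a \emph{left} ideal in $\RCA$, that $(\rho(e_i))_{i\in I}$ is an $m$-bounded approximate \emph{left} identity in $\rho(A)$ whenever $(e_i)_{i\in I}$ is an $m$-bounded approximate right identity in $A$, and that $\widetilde\pi_r\colon\rho(A)\to\mathcal B(X)$ is the unique map making the upper triangle commute, in fact a bounded \emph{homomorphism} making $X$ into a non-degenerate normed left $\rho(A)$-module with $\Vert\widetilde\pi_r\Vert\leq m\Vert\pi_r\Vert$. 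These are precisely the hypotheses of the first part of Theorem~\ref{thm:basicthm}, with $\RCA$ in the role of the ambient algebra and $\rho(A)$ in the role of the left ideal.

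First I would invoke that first part of Theorem~\ref{thm:basicthm} to obtain the unique homomorphism $\overline\pi_r\colon\RCA\to\mathcal B(X)$ extending $\widetilde\pi_r$, i.e.\ making the lower triangle of~\eqref{diag:rightsecond} commute; this automatically renders $X$ a non-degenerate left $\RCA$-module and makes $\overline\pi_r$ unital. Next I would extract the SOT-formula: Theorem~\ref{thm:basicthm} gives, for the approximate left identity $(\rho(e_i))_{i\in I}$ of $\rho(A)$ and all $R\in\RCA$, $x\in X$, that $\overline\pi_r(R)x=\lim_i\widetilde\pi_r(R\circ\rho(e_i))x$. Using the identity $R\circ\rho(a)=\rho(R(a))$ from the preliminaries together with commutativity of the upper triangle, this collapses to $\overline\pi_r(R)x=\lim_i\widetilde\pi_r(\rho(R(e_i)))x=\lim_i\pi_r(R(e_i))x$, the asserted strong-operator limit. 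Theorem~\ref{thm:basicthm} yields the crude bound $\Vert\overline\pi_r\Vert\leq m\Vert\widetilde\pi_r\Vert$; choosing an $m$-bounded approximate right identity for $A$ and reading off the norm from this explicit formula sharpens it to $\Vert\overline\pi_r\Vert\leq m\Vert\pi_r\Vert$, exactly as on the left.

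For the ideal statement I would argue, as in Theorem~\ref{thm:leftcentralizers}, that since $\rho(A)$ is a left ideal in $\RCA$ and $\overline\pi_r$ is a homomorphism, the image $\overline\pi_r(\rho(A))=\pi_r(A)$ is a left ideal in $\overline\pi_r(\RCA)$; the explicit relation follows from $\overline\pi_r(R)\pi_r(a)=\overline\pi_r(R)\overline\pi_r(\rho(a))=\overline\pi_r(R\circ\rho(a))=\overline\pi_r(\rho(R(a)))=\pi_r(R(a))$ for $R\in\RCA$, $a\in A$. The bounds on $\rho$ and $i$, the positivity assertions, and the uniqueness of $\overline\pi_r$ (and hence of $\widetilde\pi_r$) are then either immediate or inherited from the corresponding parts of Theorem~\ref{thm:basicthm} and from the ordered statements recorded around~\eqref{diag:rightfirst}.

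The only genuine obstacle is conceptual rather than computational: one must resist the temptation to apply the second (right-sided) part of Theorem~\ref{thm:basicthm}. The anti-homomorphism $\pi_r$ and the left-ideal structure of $\rho(A)$ conspire so that $\widetilde\pi_r$ is a \emph{homomorphism}, whence the correct tool is the left-sided extension result. This is the precise manifestation of the left bias discussed in the introduction, and getting it right is exactly what makes $\overline\pi_r$ a homomorphism turning $X$ into a \emph{left} $\RCA$-module rather than a right one.
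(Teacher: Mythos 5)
Your proposal is correct and follows exactly the paper's own route: the paper likewise observes that $\rho(A)$ is a left ideal in $\RCA$ with $(\rho(e_i))_{i\in I}$ as a bounded approximate \emph{left} identity and $\widetilde\pi_r$ a bounded homomorphism, then applies the \emph{first} (left-sided) part of Theorem~\ref{thm:basicthm} and finishes as in Theorem~\ref{thm:leftcentralizers}. Your explicit derivation of the SOT-formula, the sharpened bound $\Vert\overline\pi_r\Vert\leq m\Vert\pi_r\Vert$, and the ideal relation $\overline\pi_r(R)\pi_r(a)=\pi_r(R(a))$ merely spells out what the paper compresses into ``the proof is completed as for Theorem~\ref{thm:leftcentralizers}.''
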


\begin{remark}\label{rem:rightremark}
Analogously to Remark~\ref{rem:leftremark}, $\pi_r:\RCA\to\mathcal B(X)$ is the unique homomorphism making the square in diagram~\eqref{diag:rightsecond} commutative, as it must satisfy $\overline\pi_r(R)\pi_r(a)x=\pi_r(R(a))x$, for all $R\in\RCA$, $a\in A$, and $x\in X$.
\end{remark}

\begin{proof}
We know from the discussion of centralizer algebras that $\rho(A)$ is a left ideal in $\RCA$, and that $(\rho(e_i))_{i\in I}$ is an ($m$-bounded)(positive) approximate left identity in $\rho(A)$ if $(e_i)_{i\in I}$ is an ($m$-bounded) (positive) approximate right identity in $A$. Furthermore, in the results surrounding diagram~\eqref{diag:rightfirst} we have already observed that $\widetilde\pi_r:\rho(A)\to\mathcal B(X)$ is the unique map making the upper triangle commutative, and that it is in fact a bounded homomorphism with $\Vert\widetilde\pi_r\Vert\leq m\Vert\pi_r\Vert$.  Hence the first part of Theorem~\ref{thm:basicthm} applies to this situation, and the proof is completed as for Theorem~\ref{thm:leftcentralizers}.
\end{proof}

Next we turn to the associated $\DCA$-modules.

Recall the unital contractive homomorphism $\phi_l:\DCA\to\LCA$, defined by retaining the $L$-part, and the unital contractive anti-homomorphism $\phi_r:\DCA\to\RCA$, defined by retaining the $R$-part. As we have seen in Theorem~\ref{thm:leftcentralizers}, under suitable hypotheses a left $A$-module $X$ becomes a left $\LCA$-module through a homomorphism $\overline\pi_l:\LCA\to\mathcal B(X)$. Hence $X$ will become a left $\DCA$-module through the homomorphism $\overline\pi_l\circ\phi_l:\DCA\to\mathcal B(X)$. Likewise, from Theorem~\ref{thm:rightcentralizers} we see that under suitable hypotheses a right $A$-module will become a right $\DCA$-module through the anti-homomorphism $\overline\pi_r\circ\phi_r:\DCA\to\mathcal B(X)$ (recall that $\overline\pi_r:\RCA\to\mathcal B(X)$ is a homomorphism. The details are contained in the following result. The maps $\widetilde\pi_l$ and $\widetilde\pi_r$ figuring in the statements are again defined in the diagrams~\eqref{diag:leftfirst} and~\eqref{diag:rightfirst}, and $\widetilde\phi_l$, resp.\ $\widetilde\phi_r$, is the restriction of $\phi_l$, resp.\ $\phi_r$, to $\delta(A)$.

Since $\DCA$ is an involutive algebra when $A$ has a bounded involution, there are now also statements on Hilbert representations included, a new feature compared with Theorems~\ref{thm:leftcentralizers} and~\ref{thm:rightcentralizers}.

\begin{theorem}\label{thm:doublecentralizers}
Let $A$ be a normed algebra, and let $X$ be a Banach space.
\begin{enumerate}
\item
If  $A$ has an $m$-bounded approximate left identity, and $\pi_l:A\to\mathcal B(X)$ provides $X$ with the structure of a non-degenerate normed left $A$-module, then $\widetilde\pi_l\circ\widetilde\phi_l:\delta(A)\to\mathcal B(X)$ is the unique map and $\overline\pi_l\circ\phi_l:\DCA\to\mathcal B(X)$ is the unique homomorphism such that the diagram
\begin{equation}\label{diag:doubleleft}
\xymatrix
{A\ar[r]^{\pi_l}\ar[d]_{\delta} & \mathcal B(X)\\
\delta(A)\ar@{^{(}->}[r]_i \ar[ru]_{\widetilde{\pi}_l\circ\widetilde\phi_l}&\DCA\ar[u]_{\overline{\pi}_l\circ\phi_l}}
\end{equation}
is commutative. All maps in the diagram are bounded homomorphisms, and $\overline\pi_l\circ\phi_l$ is unital. One has $\Vert\delta\Vert\leq 1$, $\Vert i\Vert=1$ if $\delta(A)\neq 0$, $\Vert\widetilde\pi_l\circ\widetilde\phi_l\Vert\leq m\Vert\pi_l\Vert$, and $\Vert\overline\pi_l\circ\phi_l\Vert\leq m\Vert\pi_l\Vert$. In particular, $X$ becomes a non-degenerate normed left $\DCA$-module.

The image $\pi_l(A)$ is a two-sided ideal in $(\overline\pi_l\circ\phi_l)(\DCA)$. In fact, if $(L,R)\in\DCA$ and $a\in A$, then
$(\overline\pi_l\circ\phi_l)((L,R))\pi_l(a)=\overline\pi_l(L)\pi_l(a)=\pi_l(L(a))$, and $\pi_l(a)(\overline\pi_l\circ\phi_l)((L,R))=\pi_l(a)\overline\pi_l(L)=\pi_l(R(a))$.

If $(e_i)_{i\in I}$ is any bounded approximate left identity for $A$, and $(L,R)\in\DCA$, then $(\overline\pi_l\circ\phi_l)((L,R))=\textup{SOT-}\lim_i \pi_l(L(e_i))$.

If, in addition, $A$ is an ordered algebra with a positive bounded approximate left identity, if $X$ is ordered with a closed positive cone, and if $\pi_l$ is positive, then all algebras in the diagram are ordered and all maps are positive.

Alternatively, if, in addition, $A$ has a bounded involution, $X$ is a Hilbert space and $\pi_l$ is involutive, then all algebras in the diagram have a bounded involution and all maps are involutive.

\item

If $A$ has an $m$-bounded approximate right identity, and $\pi_r:A\to\mathcal B(X)$ provides $X$ with the structure of a non-degenerate normed right $A$-module, then $\widetilde\pi_r\circ\widetilde\phi_r:\delta(A)\to\mathcal B(X)$ is the unique map and $\overline\pi_r\circ\phi_r:\DCA\to\mathcal B(X)$ is the unique anti-homomorphism such that the diagram

\begin{equation}\label{diag:doubleright}
\xymatrix
{A\ar[r]^{\pi_r}\ar[d]_{\delta} & \mathcal B(X)\\
\delta(A)\ar@{^{(}->}[r]_i \ar[ru]_{\widetilde{\pi}_r\circ\widetilde\phi_r}&\DCA\ar[u]_{\overline{\pi}_r\circ\phi_r}}
\end{equation}
is commutative. Then the maps $\pi_r$, $\widetilde\pi_r\circ\widetilde\phi_r$ and $\overline\pi_r\circ\phi_r$ are bounded anti-homomorphisms, $\delta$ and $i$ are bounded homomorphisms, and $\overline\pi_r\circ\phi_r$ is unital. One has $\Vert\delta\Vert\leq 1$, $\Vert i\Vert=1$ if $\delta(A)\neq 0$, $\Vert\widetilde\pi_r\circ\widetilde\phi_r\Vert\leq m\Vert\pi_r\Vert$, and $\Vert\overline\pi_r\circ\phi_r\Vert\leq m\Vert\pi_r\Vert$. In particular, $X$ becomes a non-degenerate normed right $\DCA$-module.

The image $\pi_r(A)$ is a two-sided ideal in $(\overline\pi_r\circ\phi_r)(\DCA)$. In fact, if $(L,R)\in\DCA$ and $a\in A$, then
$(\overline\pi_r\circ\phi_r)((L,R))\pi_r(a)=\overline\pi_r(R)\pi_r(a)=\pi_r(R(a))$, and $\pi_r(a)(\overline\pi_r\circ\phi_r)((L,R))=\pi_r(a)\overline\pi_r(R)=\pi_r(L(a))$.

If $(e_i)_{i\in I}$ is any bounded approximate right identity for $A$, and $(L,R)\in\DCA$, then $(\overline\pi_r\circ\phi_r)((L,R))=\textup{SOT-}\lim_i \pi_r(R(e_i))$.

If $A$ is an ordered algebra with a positive bounded approximate right identity, if $X$ is ordered with a closed positive cone, and if $\pi_r$ is positive, then all algebras in the diagram are ordered and all maps are positive.

Alternatively, if, in addition, $A$ has a bounded involution, $X$ is a Hilbert space and $\pi_r$ is involutive, then all algebras in the diagram have a bounded involution and all maps are involutive.

\item If both the left-sided and the right-sided hypotheses apply, then $\overline\pi_l\circ\phi_l$ and $\overline\pi_r\circ\phi_r$ provide $X$ with the structure of a non-degenerate normed $\DCA$-bimodule.

\end{enumerate}
\end{theorem}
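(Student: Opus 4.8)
The plan is to deduce the entire left-sided first part from Theorem~\ref{thm:basicthm} applied to the ideal $\delta(A)$ inside $\DCA$, and then to translate the statements into ones about $\overline\pi_l\circ\phi_l$ and $\LCA$ by a uniqueness argument. From the discussion of centralizer algebras we already know that $\delta(A)$ is a two-sided ideal of $\DCA$ containing the $m$-bounded approximate left identity $(\delta(e_i))_{i\in I}$, and that $\widetilde\pi_l\circ\widetilde\phi_l$ is a bounded homomorphism on $\delta(A)$ with $(\widetilde\pi_l\circ\widetilde\phi_l)(\delta(a))=\widetilde\pi_l(\lambda(a))=\pi_l(a)$; since its image acts on $X$ as $\pi_l(A)$, the module is non-degenerate. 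Hence the first part of Theorem~\ref{thm:basicthm} produces a unique bounded homomorphism $\Psi\colon\DCA\to\mathcal B(X)$ extending $\widetilde\pi_l\circ\widetilde\phi_l$. I would then identify $\Psi=\overline\pi_l\circ\phi_l$: the latter is a homomorphism on $\DCA$, and using $\phi_l\circ\delta=\lambda$ together with the commutative triangle~\eqref{diag:leftsecond} one gets $(\overline\pi_l\circ\phi_l)(\delta(a))=\overline\pi_l(\lambda(a))=\widetilde\pi_l(\lambda(a))=\pi_l(a)$, so $\overline\pi_l\circ\phi_l$ also extends $\widetilde\pi_l\circ\widetilde\phi_l$, whence $\Psi=\overline\pi_l\circ\phi_l$ by uniqueness. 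Commutativity of~\eqref{diag:doubleleft}, the norm bounds (from $\Vert\delta\Vert\le1$, $\Vert\phi_l\Vert\le1$, $\Vert\widetilde\phi_l\Vert\le1$ and $\Vert\widetilde\pi_l\Vert,\Vert\overline\pi_l\Vert\le m\Vert\pi_l\Vert$), the non-degeneracy of $X$ as an $\DCA$-module, and the formula $(\overline\pi_l\circ\phi_l)((L,R))=\textup{SOT-}\lim_i\pi_l(L(e_i))$ (via $(L,R)\delta(e_i)=\delta(L(e_i))$) then follow directly. The uniqueness assertions are clear as well: commutativity of the upper triangle and surjectivity of $\delta$ onto $\delta(A)$ pin down $\widetilde\pi_l\circ\widetilde\phi_l$, while any homomorphism making the square commute extends $\widetilde\pi_l\circ\widetilde\phi_l$ and is therefore the unique extension supplied by Theorem~\ref{thm:basicthm}.

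For the two-sided ideal statement, the left action $(\overline\pi_l\circ\phi_l)((L,R))\pi_l(a)=\overline\pi_l(L)\pi_l(a)=\pi_l(L(a))$ is immediate from Theorem~\ref{thm:leftcentralizers}. The right action is the one genuinely new computation, and I expect it to be the \emph{main obstacle}, since it is exactly what separates the $\DCA$-statement from the $\LCA$-statement. Using the SOT-formula I would write $\pi_l(a)\overline\pi_l(L)=\textup{SOT-}\lim_i\pi_l(aL(e_i))$, then invoke the defining double-centralizer relation $aL(b)=R(a)b$ to replace $aL(e_i)$ by $R(a)e_i$, and finally use $\textup{SOT-}\lim_i\pi_l(e_i)=\textup{id}_X$ to obtain $\pi_l(a)\overline\pi_l(L)=\pi_l(R(a))\,\textup{SOT-}\lim_i\pi_l(e_i)=\pi_l(R(a))$. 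This is where the interplay between the $L$- and $R$-parts of a double centralizer enters in an essential way.

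The ordered case is inherited from Theorem~\ref{thm:basicthm} together with the positivity of $\delta$, $\phi_l$ and $\widetilde\pi_l$ recorded in the preparations. For the involutive case, which is the feature absent from Theorems~\ref{thm:leftcentralizers} and~\ref{thm:rightcentralizers}, I would exploit that $\delta(A)$ is invariant under the involution of $\DCA$, since $\delta(a)^*=(\rho(a)^*,\lambda(a)^*)=(\lambda(a^*),\rho(a^*))=\delta(a^*)$, and that $\widetilde\pi_l\circ\widetilde\phi_l$ is involutive on $\delta(A)$ because $(\widetilde\pi_l\circ\widetilde\phi_l)(\delta(a)^*)=\pi_l(a^*)=\pi_l(a)^*$. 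Thus $\delta(A)$ is a two-sided, involution-invariant ideal of $\DCA$ carrying a non-degenerate involutive representation, so the involutive conclusion of Theorem~\ref{thm:basicthm} applies to the extension; by the identification of the previous paragraph this extension is $\overline\pi_l\circ\phi_l$, which is therefore involutive. The remaining maps in~\eqref{diag:doubleleft} are involutive by the preparations.

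The second part is handled in exactly the same way, now applying the second (right-sided) part of Theorem~\ref{thm:basicthm} to $\delta(A)\subset\DCA$ with the anti-representation $\widetilde\pi_r\circ\widetilde\phi_r$ and identifying the resulting anti-homomorphism with $\overline\pi_r\circ\phi_r$; the double-centralizer relation is again the crux of the ideal computation $\pi_r(a)\overline\pi_r(R)=\pi_r(L(a))$. For the third part it remains to check that the left and right $\DCA$-actions commute and that the resulting bimodule is non-degenerate. Commutativity follows as in the third part of Theorem~\ref{thm:basicthm}: the operators $(\overline\pi_l\circ\phi_l)((L_1,R_1))=\overline\pi_l(L_1)$ and $(\overline\pi_r\circ\phi_r)((L_2,R_2))=\overline\pi_r(R_2)$ lie in the strong operator closures of $\pi_l(A)$ and $\pi_r(A)$ respectively, which commute, so the two operators commute by the standard double-limit argument applied to the bounded nets $\pi_l(L_1(e_i))$ and $\pi_r(R_2(e_j))$. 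Non-degeneracy of the bimodule then follows since $X$ is a non-degenerate left $\DCA$-module by the first part and a non-degenerate right $\DCA$-module by the second, which is equivalent to non-degeneracy as a bimodule.
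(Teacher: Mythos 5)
Your proposal is correct and takes essentially the same route as the paper: apply Theorem~\ref{thm:basicthm} to the two-sided ideal $\delta(A)\subset\DCA$ with its approximate identity $(\delta(e_i))_{i\in I}$, identify the unique extension with $\overline\pi_l\circ\phi_l$ (resp.\ $\overline\pi_r\circ\phi_r$) by the uniqueness clause, and settle the ideal, ordered and involutive statements exactly as the paper does. The only cosmetic difference is in proving $\pi_l(a)\overline\pi_l(L)=\pi_l(R(a))$: the paper evaluates against $\pi_l(b)x$ and invokes non-degeneracy, while you pass to SOT-limits along the approximate identity; both arguments hinge on the double centralizer relation $aL(b)=R(a)b$ and are equally valid.
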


\begin{proof}
As to the first part, from the surjectivity of $\delta$ there is at most one diagonal map making the upper triangle commutative, and we see from the discussion surrounding \eqref{diag:leftfirst} that the bounded homomorphism $\widetilde\pi_l\circ\widetilde\phi_l:\delta(A)\to\mathcal B(X)$ has this property. Since $\phi_l$ is contractive, this discussion also shows that $\Vert\widetilde\pi_l\circ\widetilde\phi_l\Vert\leq m\Vert\pi_l\Vert$.

Furthermore, we know from the discussion of centralizer algebras that $\delta(A)$ is a two-sided ideal in $\DCA$, and that $(\delta(e_i))_{i\in I}$ is an ($m$-bounded)(positive) approximate left identity in $\delta(A)$ if $(e_i)_{i\in I}$ is an ($m$-bounded) (positive) approximate left identity in $A$. We are now once more in the situation of the first part of Theorem~\ref{thm:basicthm}, and we conclude that there is at most one homomorphism from $\DCA$ into $\mathcal B(X)$ making the lower triangle commutative, and obviously $\overline\pi_l\circ\phi_l$ meets this requirement. Certainly $\Vert\overline\pi_l\circ\phi_l\Vert\leq m\Vert\pi_l\Vert$.  Furthermore, the strong limit in the first part of Theorem~\ref{thm:basicthm} translates into $(\overline\pi_l\circ\phi_l)((L,R))=\textup{SOT-}\lim_i(\widetilde\pi_l\circ\widetilde\phi_l)((L,R)(\lambda(e_i),\rho(e_i))=\textup{SOT-}\lim_i(\widetilde\pi_l\circ\widetilde\phi_l)( (\lambda(L(e_i)),\rho(R(e_i))))=\textup{SOT-}\lim_i(\widetilde\pi_l\circ\lambda)(L(e_i))=\textup{SOT-}\lim_i \pi_l(L(e_i))$.

All non-involutive statements in the first part are now clear, except the claim that $\pi_l(a)(\overline\pi_l\circ\phi_l)((L,R))=\pi_l(a)\overline\pi_l(L)=\pi_l(R(a))$. The first equality holds by the definition of $\phi_l$. As to the second we compute, for $b\in A$ and $x\in X$, that $\pi_l(a)\overline\pi_l(L)\pi_l(b)x=\pi_l(a)\pi_l(L(b))x=\pi_l(aL(b))x=\pi_l(R(a)b)x=\pi_l(R(a))\pi_l(b)x$. Hence the second equality follows from the non-degeneracy  $X$ as a left $A$-module.

Turning to the involutive case in the first part, we note that $\delta(A)$ is a two-sided ideal of $\DCA$ which is invariant under the involution of $\DCA$. In fact, $(\lambda(a),\rho(a))^*=(\lambda(a^*),\rho(a^*))$. Together with the fact that $\pi_l$ is involutive this implies that $\widetilde\pi_l\circ\phi_l$ is involutive, and then the first part of Theorem~\ref{thm:basicthm} asserts that $\widetilde\pi_l\circ\phi_l$ is involutive. The proof of the first part is now complete.

As to the second part, it is again clear that the diagonal map must be the anti-homomorphism $\widetilde\pi_r\circ\widetilde\phi_r:\delta(A)\to\mathcal B(X)$, and that $\Vert\widetilde\pi_r\circ\widetilde\phi_r\Vert\leq m\Vert\pi_r\Vert$. Thus $X$ becomes a non-degenerate normed right $\delta(A)$-module. Furthermore, we know from the discussion of centralizer algebras that $\delta(A)$ is a two-sided ideal in $\DCA$, and that $(\delta(e_i))_{i\in I}$ is an ($m$-bounded)(positive) approximate right identity in $\delta(A)$ if $(e_i)_{i\in I}$ is an ($m$-bounded) (positive) approximate right identity in $A$. Therefore the second part of Theorem~\ref{thm:basicthm} applies and shows that there is a unique anti-homomorphism from $\DCA$ into $\mathcal B(X)$ making the lower triangle commutative. Obviously $\overline\pi_r\circ\phi_r$ has this property. The rest of the second part is then proved analogously to the first part.

The third part is clear.
\end{proof}

\begin{remark}\label{rem:doubleremark}
As in Remarks~\ref{rem:leftremark} and \ref{rem:rightremark}, $\overline\pi_l\circ\phi_l$ is the unique homomorphism making the square in diagram~\eqref{diag:doubleleft} commutative. Indeed, if $\alpha_l:\DCA\to\mathcal B(X)$ is such a homomorphism, then one sees easily that, for $(L,R)\in\DCA$, $a\in A$, and $x\in X$, one must have $\alpha_l((L,R))\pi_l(a)x=\pi_l(L(a))x$. Likewise, if $\alpha_r:\DCA\to\mathcal B(X)$ is an anti-homomorphism making the square in diagram~\eqref{diag:doubleright} commutative, then it is determined by the requirement that $\alpha_r((L,R))\pi_r(a)x=\pi_r(R(a))x$, for all $(L,R)\in\DCA$, $a\in A$, and $x\in X$.
\end{remark}

\section{Module structures for centralizer algebras: faithful case}\label{sec:modulesovercentralizeralgebrasfaithfulcase}

We will now consider normed modules which are not only non-degenerate, but also faithful. In that case, the associated modules for centralizer algebras are also faithful. If the faithful module is a topological (anti-)embedding of the original algebra, then the same holds for the centralizer algebras, which (anti-)embed as appropriate normalizers of the image of the algebra. The details follow. As a preparation, we show that the mere existence of a non-degenerate faithful module is strongly related to  the injectivity of various maps between the algebra and its centralizer algebras.

\begin{proposition}\label{prop:injectivity} Let $A$ be a normed algebra.
\begin{enumerate}
\item If $A$ has a left approximate identity, then the following are equivalent:
\begin{enumerate}
\item There exist a normed space $X$ and an injective homomorphism $\pi_l: A\to\mathcal B(X)$ providing $X$ with the structure of a non-degenerate faithful normed left $A$-module;
\item In $A\overset{\delta}{\to}\DCA\overset{\phi_l}{\to}\LCA$, the canonical maps $\delta$ and $\phi_l$ are both injective homomorphisms;
\item The canonical map $\lambda:A\to\LCA$ is an injective homomorphism.
\end{enumerate}
\item If $A$ has a right approximate identity, then the following are equivalent:
\begin{enumerate}
\item There exist a normed space $X$ and an injective anti-homomorphism $\pi_r: A\to\mathcal B(X)$ providing $X$ with the structure of a non-degenerate faithful normed right $A$-module;
\item In $A\overset{\delta}{\to}\DCA\overset{\phi_r}{\to}\RCA$, the canonical map $\delta$, resp.\ $\phi_r$, is an injective homomorphism, resp.\ an injective anti-homomorphism;
\item The canonical map $\rho\!:A\!\to\RCA$ is an injective anti-homomorphism.
\end{enumerate}
\end{enumerate}
\end{proposition}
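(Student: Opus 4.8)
The plan is to prove part~(1) and to note that part~(2) follows by the symmetric argument; I would treat the three statements as a cycle, routing everything through~(c). The single observation that does most of the work is that $\Ker\lambda$ is exactly the left annihilator of $A$: since $\lambda(z)b=zb$, one has $\lambda(z)=0$ if and only if $zA=0$. Thus (c) is equivalent to the assertion that $A$ has trivial left annihilator, and this is the form in which I would use it throughout. I would also record at the outset the identity $\lambda=\phi_l\circ\delta$, which is immediate from $\delta(a)=(\lambda(a),\rho(a))$ and the definition of $\phi_l$ as retaining the $L$-part.

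For (c)~$\Rightarrow$~(a) I would simply exhibit the left regular representation: take $X=A$ as a normed space and $\pi_l=\lambda\colon A\to\mathcal B(A)$, where $\lambda(a)b=ab$. This is a contractive homomorphism, it is injective precisely by~(c), and it is non-degenerate because a left approximate identity $(e_i)_{i\in I}$ gives $b=\lim_i e_i b=\lim_i\lambda(e_i)b$, so every $b$ lies in the closed span of the elements $\lambda(a)b'$. This is the one place where the standing hypothesis of a left approximate identity is genuinely used. Conversely, for (a)~$\Rightarrow$~(c), given a faithful non-degenerate $\pi_l$ I would show the left annihilator is trivial: if $zA=0$, then $\pi_l(z)\pi_l(b)y=\pi_l(zb)y=0$ for all $b\in A$ and $y\in X$, so $\pi_l(z)$ vanishes on a dense subspace and hence is $0$; faithfulness then forces $z=0$, that is, $\lambda$ is injective.

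It remains to settle (b)~$\Leftrightarrow$~(c). The implication (b)~$\Rightarrow$~(c) is immediate, since $\lambda=\phi_l\circ\delta$ is a composition of injections. For (c)~$\Rightarrow$~(b), injectivity of $\delta$ is automatic, being the inner factor of the injective composite $\lambda$, so the real content --- and the step I expect to be the only non-formal one --- is the injectivity of $\phi_l$. Here I would invoke the double centralizer relation: if $(L,R)\in\Ker\phi_l$ then $L=0$, and $aL(b)=R(a)b$ collapses to $R(a)b=0$ for all $a,b\in A$, so that $R(a)$ lies in the left annihilator of $A$ for every $a$. Since~(c) makes that annihilator trivial, $R=0$ and $\phi_l$ is injective. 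The right-sided part~(2) is entirely parallel: one replaces the left annihilator by the right annihilator, uses $\rho=\phi_r\circ\delta$ together with the right regular representation $\rho(a)b=ba$, and reads the same relation $aL(b)=R(a)b$ with $R=0$ to place each $L(b)$ in the right annihilator.
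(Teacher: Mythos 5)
Your proof is correct, but it is organized differently from the paper's. The paper proves the cycle (a) $\Rightarrow$ (b) $\Rightarrow$ (c) $\Rightarrow$ (a): for (a) $\Rightarrow$ (b) it works with the module $X$ itself, computing $\pi_l(R(a))\pi_l(b)x=\pi_l(R(a)b)x=\pi_l(aL(b))x=0$ when $L=0$, and then using non-degeneracy plus faithfulness to conclude $R=0$ (with a similar argument for $\delta$); the step (b) $\Rightarrow$ (c) is the factorization $\lambda=\phi_l\circ\delta$, and (c) $\Rightarrow$ (a) is the left regular representation, exactly as in your proposal. You instead route both (a) and (b) through (c), after observing that $\Ker\lambda$ is precisely the left annihilator of $A$. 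The one substantive difference is where the injectivity of $\phi_l$ comes from: the paper deduces it from the existence of the faithful non-degenerate module (hypothesis (a)), whereas you deduce it from (c) alone --- if $L=0$, then $R(a)b=aL(b)=0$ places every $R(a)$ in the left annihilator, which (c) declares trivial. The computation is the same double centralizer identity in both cases; only the hypothesis invoked differs. Your organization has the merit of isolating the purely algebraic content: it makes visible that (b) $\Leftrightarrow$ (c) holds for any normed algebra, with no module and no approximate identity at all (that hypothesis enters, as you note, only to get non-degeneracy in (c) $\Rightarrow$ (a)). The paper's cycle, on the other hand, is what feeds its remark following the proposition, namely that (a) $\Rightarrow$ (b) needs remarkably little and persists in a purely algebraic setting. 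Both routes are complete, and your right-sided adaptation via the right annihilator and $\rho=\phi_r\circ\delta$ is the correct analogue.
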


\begin{proof}
We prove only the first part, the second being proved similarly.

Suppose that $X$ is a non-degenerate faithful left $A$-module. Let $(L,R)\in\DCA$ and suppose $\phi_l((L,R))=0$, i.e., $L=0$. Then, for $a,b\in A$, $x\in X$, and $(L,R)\in\DCA$, we have $\pi_l(R(a))\pi_l(b)x=\pi_l(R(a)b)x=\pi_l(aL(b))x$. Hence, if $L=0$, then $\pi_l(R(a))=0$ by non-degeneracy, implying $R(a)=0$ by the injectivity of $\pi_l$. Hence $R=0$, and $\phi_l$ is injective. If $a\in A$ and $\delta(a)=0$, then certainly $\lambda(a)=0$. In that case, for $a,b\in A$, and $x\in X$, we have $\pi_l(a)\pi_l(b)x=\pi_l(\lambda(a)b)x=0$. By non-degeneracy, $\pi_l(a)=0$, so that $a=0$ by the injectivity of $\pi_l$. This shows that (a) implies (b).

Since $\lambda=\phi_l\circ\delta$, it is trivial that (b) implies (c).

Assuming (c), it is sufficient to take $X=A$ and $\pi_l=\lambda: A\to\mathcal B(A)$. The module is faithful by assumption, and non-degenerate by the existence of a left approximate identity.
\end{proof}

\begin{remark}
Note that is not assumed that the ones-sided approximate identity is bounded. Also, it is remarkable how little is needed to show that (a) implies (b) (and hence trivially also (c)). In the left-sided case, if $A$ is an abstract algebra, $X$ is a topological vector space, and $\pi_l:A\to\mathcal B(X)$ is an injective homomorphism from $A$ into the continuous linear maps from $X$ into itself such that the elements $\pi_l(a)x$, for $a\in A$ and $x\in X$, span a dense subspace of $X$, then, by the same proof, $\delta$ and $\phi_l$ are injective, where $\DCA$ and $\LCA$ are then defined purely algebraically.
\end{remark}

Before stating the left-sided version of the main result on embedding of centralizer algebras, we introduce the necessary notation. If $X$ is a normed space, and $\mathcal S\subset\mathcal B(X)$, then let $N_l(\mathcal S, \mathcal B(X))=\{T\in\mathcal B(X) : TS\in\mathcal S\textup{ for all }S\in\mathcal S\}$, $N_r(\mathcal S, \mathcal B(X))=\{T\in\mathcal B(X) : ST\in\mathcal S\textup{ for all }S\in\mathcal S\}$, and $N(\mathcal S,\mathcal B(X))=N_l(\mathcal S, \mathcal B(X))\cap N_r(\mathcal S, \mathcal B(X))$. If $\mathcal S$ is a subalgebra of $\mathcal B(X)$, then $N_l(\mathcal S, \mathcal B(X))$, resp.\,$N_r(\mathcal S, \mathcal B(X))$, resp.\,$N(\mathcal S,\mathcal B(X))$ carries two natural norms: the norm from $\mathcal B(X)$ and the norm from $\mathcal M_l(\mathcal S)$, resp.\ $\mathcal M_r(\mathcal S)$, resp.\ $\mathcal M(\mathcal S)$.

Remarkably enough, if $X$ is a normed space, $A$ is a normed algebra with a bounded left, resp.\ right, approximate identity, and if $\pi: A\to\mathcal B(X)$ provides $X$ with the structure of a non-degenerate left, resp.\ right, $A$-module, then, in both the left-sided and right-sided case, the two canonical norms on each of $N_l(\pi(A),\mathcal B(X))$, $N_r(\pi(A),\mathcal B(X))$, and $N(\pi(A),\mathcal B(X))$, are equivalent. Introducing some notation to make this precise, if $T\in N_l(\pi(A),\mathcal B(X))$, we let $\Vert\lambda(T)\Vert$ denote the norm of the left multiplication with $T$ as an element of $\mathcal M_l(\pi(A))$. Clearly $\Vert\lambda(T)\Vert\leq\Vert T\Vert$. Similarly, if $T\in N_r(\pi(A),\mathcal B(X))$, we let $\Vert\rho(T)\Vert$ denote the norm of the right multiplication with $T$ as an element of $\mathcal M_r(\pi(A))$, and clearly $\Vert\rho(T)\Vert\leq\Vert T\Vert$. If $T\in N(\pi(A),\mathcal B(X))$, let $\Vert\delta(T)\Vert=\max (\Vert\lambda(T)\Vert, \Vert\rho(T)\Vert)$, so that $\Vert\delta(T)\Vert\leq\Vert T\Vert$. To see the equivalences, let $(e_i)_{i\in I}$ be an $m$-bounded left, resp.\ right, approximate identity for $A$. As observed in Remark~\ref{rem:nondegeneratesubmodule}, $\textup{SOT}-\lim_i\pi(e_i)=\textup{id}_X$ in both the left-sided and right-sided case. Hence, if $T\in N_l(\pi(A),\mathcal B(X))$, and $x\in X$, then $Tx=\lim_i T\pi(e_i)x$. Since $\Vert T\pi(e_i)\Vert\leq\Vert\lambda(T)\Vert\Vert\pi(e_i)\Vert$, we see that $\Vert T\Vert\leq m \Vert\pi\Vert \Vert\lambda(T)\Vert$. Likewise, $T\in N_r(\pi(A),\mathcal B(X))$, and $x\in X$, then $Tx=\lim_i \pi(e_i)Tx$, which implies that $\Vert T\Vert\leq m \Vert\pi\Vert\Vert\rho(T)\Vert$. Finally, if $T\in N(\pi(A),\mathcal B(X))$, then clearly $\Vert T\Vert\leq m \Vert\delta(T)\Vert\Vert\pi\Vert$.

\begin{theorem}\label{thm:leftembeddings}
Let $A$ be a normed algebra with an $m$-bounded approximate left identity, and let $X$ be a Banach space. Suppose that $\pi_l:A\to\mathcal B(X)$ provides $X$ with the structure of a non-degenerate faithful normed left $A$-module. Then all maps in the diagram
\begin{equation}
\xymatrix
{A\ar@{^{(}->}[r]^{\pi_l}\ar[d]_{\lambda}& \mathcal B(X)\\
\lambda(A)\ar@{^{(}->}[r]_i&\LCA\ar@{^{(}->}[u]_{\overline{\pi}_l}}
\end{equation}
from Theorem~\ref{thm:leftcentralizers} are injective homomorphisms. The canonical homomorphism $\phi_l:\DCA\to\LCA$ is also injective.

Suppose, for the remainder of this Theorem, that $\pi_l: A\to\pi_l(A)$ has a bounded inverse $\pi_l^{-1}:\pi_l(A)\to A$.

If $N_l(\pi_l(A),\mathcal B(X))$ carries the norm from either $\mathcal B(X)$ or $\mathcal M_l(\pi_l(A))$, then $\overline\pi_l$ is a bounded algebra isomorphism between $\LCA$ and $N_l(\pi_l(A),\mathcal B(X))$, with $\Vert\overline\pi_l\Vert\leq m\Vert\pi_l\Vert$ in both cases, and the inverse map $\overline\pi^{-1}_l:N_l(\pi_l(A),\mathcal B(X))\to\LCA$ is also bounded, with $\Vert\overline\pi^{-1}_l\Vert\leq\Vert\pi_l\Vert\Vert\pi_l^{-1}\Vert$ in both cases.

Likewise, if $N(\pi_l(A),\mathcal B(X))$ carries the norm from either $\mathcal B(X)$ or $\mathcal M(\pi_l(A))$, then $\overline\pi_l\circ\phi_l:\DCA\to\mathcal B(X)$ yields a bounded algebra isomorphism between $\DCA$ and $N(\pi_l(A),\mathcal B(X))$, with $\Vert\overline\pi_l\circ\phi_l\Vert\leq m \Vert\pi_l\Vert$ in both cases, and the inverse $(\overline\pi_l\circ\phi_l)^{-1}:N(\pi_l(A),\mathcal B(X))\to\DCA$ is also bounded, with $\Vert(\overline\pi_l\circ\phi_l)^{-1}\Vert\leq\Vert\pi_l\Vert\Vert\pi_l^{-1}\Vert$ in both cases.

If, in addition, $A$ is an ordered normed algebra with a positive bounded approximate left identity, $X$ is ordered with a closed positive cone, and $\pi_l$ is an isomorphism of ordered algebras between $A$ and $\pi_l(A)$, then $\overline\pi_l:\LCA\to\mathcal B(X)$ yields an isomorphism of ordered algebras between $\LCA$ and $N_l(\pi_l(A),\mathcal B(X))$, and $\overline\pi_l\circ\phi_l:\DCA\to\mathcal B(X)$ yields an isomorphism of ordered algebras between $\DCA$ and $N(\pi_l(A),\mathcal B(X))$.

Alternatively, if, in addition, $A$ has a bounded involution, $X$ is a Hilbert space and $\pi_l$ is involutive, then $\overline\pi_l\circ\phi_l:\DCA\to\mathcal B(X)$ yields an isomorphism of involutive algebras between $\DCA$ and $N(\pi_l(A),\mathcal (X))$.

\end{theorem}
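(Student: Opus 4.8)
The plan is to establish Theorem~\ref{thm:leftembeddings} in four stages, building each isomorphism claim on the module structures already produced by Theorem~\ref{thm:leftcentralizers} and the injectivity of the canonical maps guaranteed by Proposition~\ref{prop:injectivity}. First I would dispose of the injectivity statements: since $\pi_l$ is faithful and $A$ has a bounded approximate left identity, Proposition~\ref{prop:injectivity}(1) gives at once that $\lambda$ and $\phi_l$ (and $\delta$) are injective. The injectivity of $\overline\pi_l$ on $\LCA$ is the key new point: if $\overline\pi_l(L)=0$ then, using the identity $\overline\pi_l(L)\pi_l(a)=\pi_l(L(a))$ from Theorem~\ref{thm:leftcentralizers} together with the faithfulness of $\pi_l$, one concludes $L(a)=0$ for all $a$, hence $L=0$. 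Since $i$ is an inclusion and $\pi_l$ is assumed injective, the whole diagram consists of injective homomorphisms.

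The heart of the argument is to identify the range of $\overline\pi_l$ as $N_l(\pi_l(A),\mathcal B(X))$. I would prove the two inclusions separately. For $\overline\pi_l(\LCA)\subseteq N_l(\pi_l(A),\mathcal B(X))$, the relation $\overline\pi_l(L)\pi_l(a)=\pi_l(L(a))\in\pi_l(A)$ is exactly the left-normalizing condition. For the reverse inclusion, given $T\in N_l(\pi_l(A),\mathcal B(X))$ I would manufacture a preimage: for each $a\in A$ the operator $T\pi_l(a)$ lies in $\pi_l(A)$, so $\pi_l^{-1}(T\pi_l(a))$ is a well-defined element of $A$ (here the hypothesis that $\pi_l^{-1}$ exists is used), and I would check that $L_T:a\mapsto\pi_l^{-1}(T\pi_l(a))$ is a bounded left centralizer with $\overline\pi_l(L_T)=T$. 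Verifying $L_T(ab)=L_T(a)b$ uses that $\pi_l$ is a homomorphism and that $\pi_l(a)$ commutes appropriately; boundedness of $L_T$ follows from boundedness of $\pi_l^{-1}$. The norm bounds then fall out: $\Vert\overline\pi_l\Vert\leq m\Vert\pi_l\Vert$ is already recorded in Theorem~\ref{thm:leftcentralizers}, and from $L_T(a)=\pi_l^{-1}(T\pi_l(a))$ one reads off $\Vert L_T\Vert\leq\Vert\pi_l^{-1}\Vert\Vert T\Vert\Vert\pi_l\Vert$, giving $\Vert\overline\pi_l^{-1}\Vert\leq\Vert\pi_l\Vert\Vert\pi_l^{-1}\Vert$. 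That these bounds hold for \emph{either} choice of norm on the normalizer is precisely the content of the norm-equivalence discussion preceding the theorem, so I would invoke that to pass between the $\mathcal B(X)$-norm and the $\mathcal M_l(\pi_l(A))$-norm.

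For the $\DCA$ statement I would run the parallel argument with $N(\pi_l(A),\mathcal B(X))$, using that $\pi_l(A)$ is a \emph{two-sided} ideal in $(\overline\pi_l\circ\phi_l)(\DCA)$ from Theorem~\ref{thm:doublecentralizers}(1). A double centralizer $(L,R)$ produces an operator normalizing $\pi_l(A)$ on both sides, since $(\overline\pi_l\circ\phi_l)((L,R))\pi_l(a)=\pi_l(L(a))$ and $\pi_l(a)(\overline\pi_l\circ\phi_l)((L,R))=\pi_l(R(a))$; conversely, from $T\in N(\pi_l(A),\mathcal B(X))$ I would construct the pair $(L_T,R_T)$ with $R_T(a)=\pi_l^{-1}(\pi_l(a)T)$, check that it is a genuine double centralizer (the compatibility $aL_T(b)=R_T(a)b$ following from $\pi_l(a)T\pi_l(b)$ read two ways), and verify surjectivity and the norm bounds as before, now with $\Vert\delta(T)\Vert=\max(\Vert\lambda(T)\Vert,\Vert\rho(T)\Vert)$ controlling the $\DCA$-norm.

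The ordered and involutive refinements I would treat last and briefly, as they add no genuinely new difficulty. In the ordered case, positivity of $\overline\pi_l$ is already in Theorem~\ref{thm:leftcentralizers}; for the inverse I would observe that if $T$ is a positive operator normalizing $\pi_l(A)$, then since $\pi_l$ is an order isomorphism onto its image, $L_T(a)=\pi_l^{-1}(T\pi_l(a))$ carries positive $a$ to positive $L_T(a)$, so $\overline\pi_l^{-1}$ is positive too, making both maps isomorphisms of ordered algebras. In the involutive case, Theorem~\ref{thm:doublecentralizers}(1) already shows $\overline\pi_l\circ\phi_l$ is involutive; combined with the bijectivity just established, it is automatically an isomorphism of involutive algebras onto $N(\pi_l(A),\mathcal B(X))$. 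I expect the main obstacle to be the clean construction and verification of the preimage centralizers $L_T$ and the double centralizer $(L_T,R_T)$ — in particular confirming the cross-relation $aL_T(b)=R_T(a)b$ — since this is where the bounded inverse $\pi_l^{-1}$ is genuinely needed and where surjectivity onto the normalizer, as opposed to mere containment, is secured.
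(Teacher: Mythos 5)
Your proposal is correct and takes essentially the same route as the paper: the same injectivity arguments (Proposition~\ref{prop:injectivity} plus the identity $\overline\pi_l(L)\pi_l(a)=\pi_l(L(a))$ and faithfulness of $\pi_l$), the same inverse constructions $T\mapsto\pi_l^{-1}(T\pi_l(\,\cdot\,))$ and $T\mapsto\pi_l^{-1}(\pi_l(\,\cdot\,)T)$ assembled into a double centralizer with the cross-relation checked by reading $\pi_l(a)T\pi_l(b)$ two ways, and the same quick reductions for the norm bounds via the equivalence discussion and for the ordered and involutive refinements. No gaps.
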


\begin{proof}
By Proposition~\ref{prop:injectivity}, $\lambda$ and $\phi_l$ are injective. From Theorem~\ref{thm:leftcentralizers} we know that $\overline\pi_l(L)\pi_l(a)=\pi_l(L(a))$, for all $L\in\LCA$ and $a\in A$, so that the injectivity of $\pi_l$ implies that $\overline\pi_l$ is injective.

For the remainder assume, then, that $\pi_l^{-1}: \pi_l(A)\to A$ is bounded.

As observed in Theorem~\ref{thm:leftcentralizers}, it follows from $\overline\pi_l(L)\pi_l(a)=\pi_l(L(a))$, for all $L\in\LCA$ and $a\in A$, that $\overline\pi_l$ maps $\LCA$ into $N_l(\pi_l(A),\mathcal B(X))$. We know from Theorem~\ref{thm:leftcentralizers} that $\Vert\overline\pi_l\Vert\leq m\Vert\pi_l\Vert$ if $N_l(\pi_l(A),\mathcal B(X))$ carries the norm from $\mathcal B(X)$, and hence this upper bound also holds when it carries the norm from $\mathcal M_l(\pi_l(A))$. To construct the inverse of $\pi_l$ we define, in an anticipating notation, the map $\overline\pi_l^{-1}: N_l(\pi_l(A),\mathcal B(X))\to\mathcal B(A)$ as $\overline\pi_l^{-1}(T)(a)=\pi_l^{-1}(T\pi_l(a))$, for $T\in N_l(\pi_l(A),\mathcal B(X))$ and $a\in A$. Clearly, for both norms on $N_l(\pi_l(A))$, $\overline\pi^{-1}_l$ is a bounded homomorphism, and $\Vert\overline\pi^{-1}_l\Vert\leq\Vert\pi_l\Vert\Vert\pi_l^{-1}\Vert$.
For $a,b\in A$, and $T\in N_l(\pi_l(A),\mathcal B(X))$, we have $\overline\pi^{-1}_l(T)(ab)=\pi_l^{-1}(T\pi_l(ab))=\pi_l^{-1}((T\pi_l(a))\pi_l(b))=\pi_l^{-1}(T\pi_l(a))b=\overline\pi_l^{-1}(T)(a)b$, so that $\overline\pi_l^{-1}(T)$ is a left centralizer. Hence, in fact, $\overline\pi_l^{-1}$ maps $N_l(\pi_l(A),\mathcal B(X))$ into $\LCA$.
For $T\in N_l(\pi_l(A)$, $\mathcal B(X)),a\in A$, and $x\in X$, we have $(\overline\pi_l\circ\overline\pi_l^{-1}(T))\pi_l(a)x=\overline\pi_l(\overline\pi_l^{-1}(T))\overline\pi_l(\lambda(a))x=\overline\pi_l(\overline\pi_l^{-1}(T)\circ\lambda(a))x=\overline\pi_l(\lambda(\overline\pi_l^{-1}(T)(a)))x= \pi_l(\overline\pi_l^{-1}(T)a)x=T\pi_l(a)x$. Since $X$ is non-degenerate, we conclude that $\overline\pi_l\circ\overline\pi_l^{-1}(T)=T$.
Furthermore, if $L\in\LCA$, and $a\in A$, then $(\overline\pi_l^{-1}\circ\overline\pi_l)(L)(a)=\pi_l^{-1}(\overline\pi_l(L)\pi_l(a))=\pi_l^{-1}(\pi_l(L(a)))=L(a)$. Hence $(\overline\pi_l^{-1}\circ\overline\pi_l)(L)=L$.
This concludes the proof of the statement that $\overline\pi_l$ is a topological embedding of $\LCA$ in $\mathcal B(X)$ and of the upper bound for the norm of its inverse, for both norms on $N_l(\pi_l(A))$.

Now we turn to the statements on $\DCA$. We have already observed in the first part of Theorem~\ref{thm:doublecentralizers} that $\overline\pi_l\circ\phi_l$ maps $\DCA$ into $N(\pi_l(A),\mathcal B(X))$. To construct its inverse, we define the additional map $\mu_l: N_r(\pi_l(A),\mathcal B(X))\to\mathcal B(A)$ as $\mu_l(T)(a)=\pi_l^{-1}(\pi_l(a)T)$, for $T\in N_r(\pi_l(A),\mathcal B(X))$, and $a\in A$. Clearly $\mu_l$ is a bounded anti-homomorphism, and $\Vert\mu_l\Vert\leq\Vert\pi_l\Vert\Vert\pi_l^{-1}\Vert$, for both norms on $N_r(\pi_r(A),\mathcal B(X))$.
For $a,b\in A$ and $T\in N_r(\pi_l(A),\mathcal B(X))$ we have $\mu_l(T)(ab)=\pi_l^{-1}(\pi_l(ab)T)=\pi_l^{-1}(\pi_l(a)(\pi_l(b)T))=a\mu_l(T)(b)$. Hence $\mu_l(T)$ is a right centralizer and we have, in fact, a bounded anti-homomorphism $\mu_l:N_r(\pi_l(A),\mathcal B(X))\to\RCA$.

Suppose now that $T\in N(\pi_l(A),\mathcal B(X))$. Then the pair $(\overline\pi_l^{-1}(T), \mu_l(T))$ is a double centralizer. Indeed, we already know that $\overline\pi_l^{-1}(T)\in\LCA$ and that $\mu_l(T)\in\RCA$, and furthermore, for $a,b\in A$, we have $a(\overline\pi_l^{-1}(T)(b))=a\pi_l^{-1}(T\pi_l(b))=\pi_l^{-1}(\pi_l(a)T\pi_l(b))=\pi_l^{-1}(\pi_l(a)T)b=(\mu_l(T)(a))b$. Thus we obtain a map $\psi_l:N(\pi_l(A),\mathcal B(X))\to\DCA$ which is defined, for $T\in N(\pi_l(A),\mathcal B(X))$, by $\psi_l(T)=(\overline\pi_l^{-1}(T),\mu_l(T))$. Clearly $\psi_l$ is a bounded homomorphism, and $\Vert\psi_l\Vert\leq\Vert\pi_l\Vert\Vert\pi_l^{-1}\Vert$ for both norms on $N(\pi_l(A),\mathcal B(X))$, since both $\overline\pi_l^{-1}$ and $\mu_l$ satisfy this estimate in two cases.

We proceed by showing that $\psi_l:N(\pi_l(A),\mathcal B(X))\to\DCA$ and $\overline\pi_l\circ\phi_l:\DCA\to N(\pi_l(A),\mathcal B(X))$ are inverse to each other. It is immediate from the definitions that $(\overline\pi_l\circ\phi_l)\circ\psi_l$ is the identity on $N(\pi_l(A),\mathcal B(X))$. In the other direction, let $(L,R)\in\DCA$. Then $(\psi_l\circ(\overline\pi_l\circ\phi_l))((L,R))=(L,\mu_l(\overline\pi_l(L)))$. Now, for $a\in A$,
$\mu_l(\overline\pi_l(L))(a)=\pi_l^{-1}(\pi_l(a)\overline\pi_l(L))$. Hence $\pi_l(\mu_l(\overline\pi_l(L))(a))=\pi_l(a)\overline\pi_l(L)$. On the other hand, we had already observed in the first part of Theorem~\ref{thm:doublecentralizers} that $\pi_l(a)\overline\pi_l(L)=\pi_l(R(a))$. By the injectivity of $\pi_l$ we conclude that $\mu_l(\overline\pi_l(L))(a)=R(a)$, and hence $\psi_l\circ(\overline\pi_l\circ\phi_l)$ is the identity on $\DCA$. This concludes the proof of the statements concerning $\DCA$.

We now turn to the ordered situation. We know from Theorem~\ref{thm:leftcentralizers} that $\overline\pi_l$ is positive. Since we have assumed that $\pi_l^{-1}:\pi_l(A)\to A$ is positive, it is immediate from $\overline\pi_l^{-1}(T)(a)=\pi_l^{-1}(T\pi_l(a))$, for $T\in N_l(\pi_l(A),\mathcal B(X))$, that $\overline\pi_l^{-1}$ is positive. Hence $\overline\pi_l:N_l(\pi_l(A))\to\LCA$ is an isomorphism of ordered algebras. Likewise, we know from the first part of Theorem~\ref{thm:doublecentralizers} that $\overline\pi_l\circ\phi_l$ is positive. Since $\mu_l(T)(a)=\pi_l^{-1}(\pi_l(a)T)$, for $T\in N_r(\pi_l(A),\mathcal B(X))$, the assumption that $\pi_l^{-1}$ is positive shows that $\mu_l$ is positive. Hence this is also true for $\psi_l$, and $\overline\pi_l\circ\phi_l:\DCA\to N(\pi_l(A),\mathcal B(X))$ is an isomorphism of ordered algebras.

As to the involutive situation, we know from the first part of Theorem~\ref{thm:doublecentralizers} that $\overline\pi_l\circ\phi_l$ is involutive. Hence so is the image, and since the inverse of an involutive map is necessarily involutive, we are done.
\end{proof}

Suppose that, in Theorem~\ref{thm:leftembeddings}, $\pi_l: A\to\pi_l(A)$ has a bounded inverse $\pi_l^{-1}:\pi_l(A)\to A$. Then the definition of $\overline\pi_l^{-1}$ in the proof shows that, after identifying $A$ with its image $\pi_l(A)$, $\LCA$ is to be identified with all left multiplications by elements of $N_l(\pi_l(A),\mathcal B(X))$, and, likewise, $\DCA$ is to be identified with all pairs consisting of a left and a right multiplication by the same element of $N(\pi_l(A),\mathcal B(X))$. Therefore the following excerpts from Theorem~\ref{thm:leftembeddings} hold.

\begin{corollary}\label{cor:leftisomorphiccopies}
Let $A$ be a normed algebra with a bounded approximate left identity, and let $X$ be a Banach space. Suppose that $\pi_l:A\to\mathcal B(X)$ provides $X$ with the structure of a non-degenerate faithful normed left $A$-module, and that $\pi_l$ is an embedding of $A$ as a topological algebra. Then $\LCA$ is canonically isomorphic, as a topological algebra, with $N_l(\pi_l(A),\mathcal B(X))$, where $N_l(\pi_l(A),\mathcal B(X))$ can carry either the norm from $\mathcal B(X)$ or the equivalent norm from $\mathcal M_l(\pi_l(A))$. Likewise, $\DCA$ is canonically isomorphic, as a topological algebra, with $N(\pi_l(A),\mathcal B(X))$, where $N(\pi_l(A),\mathcal B(X))$ can carry either the norm from $\mathcal B(X)$ or the equivalent norm from $\mathcal M(\pi_l(A))$.

If, in addition, $A$ is an ordered normed algebra with a positive bounded approximate left identity, $X$ is ordered with a closed positive cone, and $\pi_l$ is an isomorphism of ordered algebras between $A$ and $\pi_l(A)$, then the above two canonical isomorphisms are isomorphisms of ordered algebras.

Alternatively, if, in addition, $A$ has a bounded involution, $X$ is a Hilbert space, and $\pi_l$ is involutive, then the above canonical isomorphism between $\DCA$ and $N(\pi_l(A),\mathcal B(X))$ is an isomorphism of involutive algebras.
\end{corollary}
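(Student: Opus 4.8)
The plan is to deduce the corollary directly from Theorem~\ref{thm:leftembeddings}, of which it is simply a reformulation once the identifications described in the paragraph preceding the statement are taken into account. The first step is to unwind the hypothesis that $\pi_l$ is an embedding of $A$ as a topological algebra: this means precisely that $\pi_l:A\to\pi_l(A)$ is a bounded bijective homomorphism whose inverse $\pi_l^{-1}:\pi_l(A)\to A$ is again bounded. That is exactly the standing assumption made for the second half of Theorem~\ref{thm:leftembeddings}, so all of its conclusions are at our disposal; no new computation is needed.

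For the statement concerning $\LCA$, I would point to the canonical map $\overline\pi_l$ produced by Theorem~\ref{thm:leftcentralizers}. Theorem~\ref{thm:leftembeddings} asserts that $\overline\pi_l$ is a bounded algebra isomorphism of $\LCA$ onto $N_l(\pi_l(A),\mathcal B(X))$ with bounded inverse, and that the bounds $\Vert\overline\pi_l\Vert\leq m\Vert\pi_l\Vert$ and $\Vert\overline\pi_l^{-1}\Vert\leq\Vert\pi_l\Vert\Vert\pi_l^{-1}\Vert$ hold whether $N_l(\pi_l(A),\mathcal B(X))$ carries the norm from $\mathcal B(X)$ or the norm from $\mathcal M_l(\pi_l(A))$. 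Since the equivalence of these two norms was already established in the discussion immediately preceding Theorem~\ref{thm:leftembeddings}, this yields the asserted topological-algebra isomorphism $\LCA\cong N_l(\pi_l(A),\mathcal B(X))$ for either choice of norm. The word ``canonical'' is justified by the explicit formula $\overline\pi_l^{-1}(T)(a)=\pi_l^{-1}(T\pi_l(a))$ from the proof of Theorem~\ref{thm:leftembeddings}, which, after identifying $A$ with $\pi_l(A)$, realises $\LCA$ as the algebra of left multiplications by elements of $N_l(\pi_l(A),\mathcal B(X))$.

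The statement for $\DCA$ is handled in the same way, now invoking the $\DCA$-part of Theorem~\ref{thm:leftembeddings}: the canonical map is $\overline\pi_l\circ\phi_l$, which that theorem shows to be a bounded algebra isomorphism of $\DCA$ onto $N(\pi_l(A),\mathcal B(X))$ with bounded inverse $\psi_l$, again for both the $\mathcal B(X)$-norm and the $\mathcal M(\pi_l(A))$-norm, whose equivalence is likewise already known. Under the identification of $A$ with $\pi_l(A)$, the pair $\psi_l(T)=(\overline\pi_l^{-1}(T),\mu_l(T))$ realises $\DCA$ as the pairs of left and right multiplications by a common element of $N(\pi_l(A),\mathcal B(X))$, which is the content of ``canonical'' here.

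Finally, the ordered and involutive refinements require no extra work: the ordered conclusion is exactly the ordered part of Theorem~\ref{thm:leftembeddings}, with $\overline\pi_l$ and $\psi_l$ (hence their inverses) positive under the stated hypotheses, and the involutive conclusion for $\DCA$ is exactly its involutive part. I expect no genuine obstacle in this argument, since all the mathematical content already resides in Theorem~\ref{thm:leftembeddings} and in the norm-equivalence discussion preceding it. The only point demanding a moment's care is the bookkeeping at the very start---matching the corollary's ``topological embedding'' hypothesis to the boundedness of $\pi_l^{-1}$ assumed in the theorem, and confirming that the maps called ``canonical'' in the corollary are precisely $\overline\pi_l$ and $\overline\pi_l\circ\phi_l$.
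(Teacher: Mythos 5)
Your proposal is correct and takes essentially the same route as the paper, which likewise presents this corollary as a direct excerpt of Theorem~\ref{thm:leftembeddings}: the topological-embedding hypothesis is translated into boundedness of $\pi_l^{-1}$, the two-norm statements rest on the equivalence established in the discussion preceding that theorem, and the identifications making the isomorphisms ``canonical'' come from the explicit formulas for $\overline\pi_l^{-1}$ and $\psi_l$ in its proof. No gaps.
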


\begin{corollary}\label{cor:leftisometriccopies}
Let $X$ be a Banach space, and suppose that $A$ is a (not necessarily closed) subalgebra of $\mathcal B(X)$, acting non-degenerately on $X$. If $A$ has a bounded approximate left identity, then $\LCA$, resp.\ $\DCA$, is canonically isometrically isomorphic with $N_l(A,\mathcal B(X))$, resp.\ $N(A,\mathcal B(X))$, where the latter algebra is supplied with the norm from $\mathcal M_l(A)$, resp.\ $\mathcal M(A)$.

If, in addition, $X$ is ordered with a closed positive cone, and $A$ has a positive bounded approximate left identity, then the above two canonical isomorphisms are isomorphisms of ordered algebras.

Alternatively, if, in addition, $X$ is a Hilbert space, and $A$ is involutive, then the canonical isomorphism between $\mathcal M(A)$ and $N(A,\mathcal B(X))$ is involutive.
\end{corollary}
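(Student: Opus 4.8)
The plan is to deduce this corollary from Theorem~\ref{thm:leftembeddings} by taking $\pi_l\colon A\to\mathcal B(X)$ to be the inclusion map. Since $A$ carries the operator norm inherited from $\mathcal B(X)$, this $\pi_l$ is an isometric embedding of $A$ as a topological algebra onto $\pi_l(A)=A$, so that $\Vert\pi_l\Vert=\Vert\pi_l^{-1}\Vert=1$ and all hypotheses of Theorem~\ref{thm:leftembeddings} are met. That theorem then at once yields that $\overline\pi_l$ is a bounded algebra isomorphism between $\LCA$ and $N_l(A,\mathcal B(X))$, and that $\overline\pi_l\circ\phi_l$ is a bounded algebra isomorphism between $\DCA$ and $N(A,\mathcal B(X))$, with bounded inverses, for either of the two equivalent norms on the normalizer algebras. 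The only point left to establish is that these isomorphisms are in fact \emph{isometric} once $N_l(A,\mathcal B(X))$ and $N(A,\mathcal B(X))$ are given the norms from $\mathcal M_l(A)$ and $\mathcal M(A)$, respectively.

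For the isometry I would argue directly from the explicit descriptions already recorded. Fix $L\in\LCA$ and put $T=\overline\pi_l(L)\in N_l(A,\mathcal B(X))$. By Theorem~\ref{thm:leftcentralizers} one has $\overline\pi_l(L)\pi_l(a)=\pi_l(L(a))$ for all $a\in A$; since $\pi_l$ is the inclusion this reads $Ta=L(a)$, so the left multiplication $\lambda(T)$ by $T$ on $A$ coincides with $L$ as an element of $\mathcal M_l(A)=\LCA$. Consequently $\Vert T\Vert_{\mathcal M_l(A)}=\Vert\lambda(T)\Vert=\Vert L\Vert$, which is exactly the asserted isometry for $\LCA$. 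For $\DCA$, fix $(L,R)\in\DCA$ and put $T=(\overline\pi_l\circ\phi_l)((L,R))\in N(A,\mathcal B(X))$. The two identities from the first part of Theorem~\ref{thm:doublecentralizers}, namely $T\pi_l(a)=\pi_l(L(a))$ and $\pi_l(a)T=\pi_l(R(a))$, become $Ta=L(a)$ and $aT=R(a)$, so that $\lambda(T)=L$ and $\rho(T)=R$. Hence $\Vert T\Vert_{\mathcal M(A)}=\max(\Vert\lambda(T)\Vert,\Vert\rho(T)\Vert)=\max(\Vert L\Vert,\Vert R\Vert)=\Vert(L,R)\Vert$, giving the isometry for $\DCA$.

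Finally, the ordered and involutive refinements require no extra work: the inclusion $\pi_l$ is trivially an isomorphism of ordered algebras between $A$ and $\pi_l(A)=A$ (the orders coincide) with positive inverse, and, when $X$ is a Hilbert space and $A$ is involutive, $\pi_l$ is involutive since the involution on $A$ is by hypothesis the restriction of the adjoint on $\mathcal B(X)$. The corresponding conclusions of Theorem~\ref{thm:leftembeddings} then transfer verbatim. I do not expect a genuine obstacle here: the content is a specialization of Theorem~\ref{thm:leftembeddings}, and the one substantive step is the upgrade from a topological to an isometric isomorphism, which is handled by the identifications $\lambda(\overline\pi_l(L))=L$ and $\rho((\overline\pi_l\circ\phi_l)((L,R)))=R$ above rather than by the generic norm estimates $m\Vert\pi_l\Vert$ and $\Vert\pi_l\Vert\Vert\pi_l^{-1}\Vert$.
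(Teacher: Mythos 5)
Your proposal is correct and follows essentially the same route as the paper: the paper also derives this corollary by specializing Theorem~\ref{thm:leftembeddings} to the inclusion $\pi_l:A\hookrightarrow\mathcal B(X)$ and observing that the inverse isomorphism sends $T$ to the left multiplication $\lambda(T)$ (resp.\ the pair $(\lambda(T),\rho(T))$), so that the identifications $\lambda(\overline\pi_l(L))=L$ and $(\lambda(T),\rho(T))=(L,R)$ make the correspondence isometric by the very definition of the norms from $\mathcal M_l(A)$ and $\mathcal M(A)$. Your explicit verification of the isometry and of the ordered and involutive refinements merely spells out what the paper compresses into the paragraph preceding the corollary.
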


\begin{remark}\label{rem:isometricembedding}
If $A$ is a normed algebra with a bounded left approximate identity, then it follows from Corollary~\ref{cor:leftisometriccopies} that, for each isometric non-degenerate embedding into $\mathcal B(X)$, for some Banach space $X$, the left normalizer of the image, resp.\ the normalizer of the image, is always canonically isometrically isomorphic with $\mathcal M_l(A)$, resp.\ $\mathcal M(A)$. If, in addition, $A$ is involutive with an isometric involution, then, for each involutive isometric non-degenerate embedding into $\mathcal B(X)$, for some Hilbert space $X$, the normalizer of the image is always isometrically involutively isomorphic with $\mathcal M(A)$. As a special case, we retrieve the well known fact that the multiplier algebra of a $C^*$-algebra is $C^*$-isomorphic with the normalizer of the image in any faithful non-degenerate involutive Hilbert representation.
\end{remark}

\begin{remark}\label{rem:compactoperators}
If $X$ is a Banach space, let $\mathcal K(X)$ denote the compact operators on $X$. For $T\in\mathcal B(X)$, it is easily checked that the norm of the corresponding left and right multiplication on $\mathcal K (X)$ is in both cases equal to the norm of $T$ as an element of $\mathcal B(X)$. If $\mathcal K(X)$ has a bounded left approximate identity (e.g., if $X$ has a Schauder basis), then Corollary~\ref{cor:leftisometriccopies} therefore asserts that $\mathcal M_l(\mathcal K(X))$ and $\mathcal M(\mathcal K(X))$ are both canonically isometrically isomorphic with $\mathcal B(X)$. More is true, however: these isometric isomorphisms are both valid without any assumption on $\mathcal K(X)$, see \cite[p.\ 313]{JohnsonLMS}. In the same paper, it is also shown \cite[p.\ 314]{JohnsonLMS} that $\mathcal M_r(\mathcal K(X))$ is always isometrically isomorphic with the algebra of bounded operators on the dual space of $X$. We refer to \cite[Section~1.7.14]{PalmerI} for further results in this vein.
\end{remark}

The right-sided version of Theorem~\ref{thm:leftembeddings} reads as follows. The statement that the right centralizer algebra of $A$ is isomorphic with the left normalizer of the image is not a mistake.

\begin{theorem}\label{thm:rightembeddings}
Let $A$ be a normed algebra with an $m$-bounded approximate right identity, and let $X$ be a Banach space. Suppose that $\pi_r:A\to\mathcal B(X)$ provides $X$ with the structure of a non-degenerate faithful normed right $A$-module. Then in the diagram
\begin{equation}
\xymatrix
{A\ar@{^{(}->}[r]^{\pi_r}\ar[d]_{\rho}& \mathcal B(X)\\
\rho(A)\ar@{^{(}->}[r]_i&\RCA\ar@{^{(}->}[u]_{\overline{\pi}_r}}
\end{equation}
from Theorem~\ref{thm:rightcentralizers}, $\pi_r$ and $\rho$ are injective anti-homomorphisms, and $i$ and $\overline\pi_r$ are injective homomorphisms. The canonical anti-homomorphism $\phi_r:\DCA\to\RCA$ is also injective.

Suppose, for the remainder of this Theorem, that $\pi_r: A\to\pi_r(A)$ has a bounded inverse $\pi_r^{-1}:\pi_r(A)\to A$.

If $N_l(\pi_r(A),\mathcal B(X))$ carries the norm from either $\mathcal B(X)$ or $\mathcal M_l(\pi_r(A))$, then $\overline\pi_r$ is a bounded algebra isomorphism between $\RCA$ and $N_l(\pi_r(A),\mathcal B(X))$, with $\Vert\overline\pi_r\Vert\leq m\Vert\pi_r\Vert$ in both cases, and the inverse map $\overline\pi^{-1}_r:N_l(\pi_r(A),\mathcal B(X))\to\RCA$ is also bounded, with $\Vert\overline\pi^{-1}_r\Vert\leq\Vert\pi_r\Vert\Vert\pi_r^{-1}\Vert$ in both cases.

Likewise, if $N(\pi_r(A),\mathcal B(X))$ carries the norm from either $\mathcal B(X)$ or $\mathcal M(\pi_r(A))$, then $\overline\pi_r\circ\phi_r:\DCA\to\mathcal B(X)$ yields a bounded algebra anti-isomorphism between $\DCA$ and $N(\pi_r(A),\mathcal B(X))$, with $\Vert\overline\pi_r\circ\phi_r\Vert\leq m \Vert\pi_r\Vert$ in both cases, and the inverse map $(\overline\pi_r\circ\phi_r)^{-1}:N(\pi_r(A),\mathcal B(X))\to\DCA$ is also bounded, with $\Vert(\overline\pi_r\circ\phi_r)^{-1}\Vert\leq\Vert\pi_r\Vert\Vert\pi_r^{-1}\Vert$ in both cases.

If, in addition, $A$ is an ordered normed algebra with a positive bounded approximate right identity, $X$ is ordered with a closed positive cone, and $\pi_r$ is an anti-isomorphism of ordered algebras between $A$ and $\pi_r(A)$, then $\overline\pi_r:\RCA\to\mathcal B(X)$ yields an isomorphism of ordered algebras between $\RCA$ and $N_l(\pi_r(A),\mathcal B(X))$, and $\overline\pi_r\circ\phi_r:\DCA\to\mathcal B(X)$ yields an anti-isomorphism of ordered algebras between $\DCA$ and $N(\pi_r(A),\mathcal B(X))$.

Alternatively, if, in addition, $A$ has a bounded involution, $X$ is a Hilbert space and $\pi_r$ is involutive, then $\overline\pi_r\circ\phi_r:\DCA\to\mathcal B(X)$ yields an anti-isomorphism of involutive algebras between $\DCA$ and $N(\pi_r(A),\mathcal (X))$.

\end{theorem}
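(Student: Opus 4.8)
The plan is to mirror the proof of Theorem~\ref{thm:leftembeddings} step for step, being careful at each stage to track which side each object lives on, since (as the preamble to the theorem warns) the \emph{right} centralizer algebra will be realized as the \emph{left} normalizer of the image. First I would dispose of injectivity exactly as in the left-sided case: the second part of Proposition~\ref{prop:injectivity} gives that $\rho$ and $\phi_r$ are injective, and from Theorem~\ref{thm:rightcentralizers} we have the intertwining relation $\overline\pi_r(R)\pi_r(a)=\pi_r(R(a))$ for all $R\in\RCA$ and $a\in A$; since $\pi_r$ is injective, this forces $\overline\pi_r$ to be injective as well. The same relation shows $\overline\pi_r(R)\pi_r(a)\in\pi_r(A)$, so that $\overline\pi_r$ already maps $\RCA$ into $N_l(\pi_r(A),\mathcal B(X))$, which is the first appearance of the side-swap.

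For the isomorphism $\RCA\cong N_l(\pi_r(A),\mathcal B(X))$, assuming now that $\pi_r^{-1}$ is bounded, I would define $\overline\pi_r^{-1}\colon N_l(\pi_r(A),\mathcal B(X))\to\mathcal B(A)$ by $\overline\pi_r^{-1}(T)(a)=\pi_r^{-1}(T\pi_r(a))$, which is well defined precisely because $T\in N_l$. The one point requiring genuine care, and the heart of the side-swap, is verifying that $\overline\pi_r^{-1}(T)$ is a \emph{right} centralizer rather than a left one. Here one uses that $\pi_r$ is an \emph{anti}-homomorphism: applying $\pi_r$ to the candidate identity $\overline\pi_r^{-1}(T)(ab)=a\,\overline\pi_r^{-1}(T)(b)$ and expanding $\pi_r(ab)=\pi_r(b)\pi_r(a)$ on both sides reduces it to $T\pi_r(b)\pi_r(a)=T\pi_r(b)\pi_r(a)$, so the claim holds and $\overline\pi_r^{-1}$ maps into $\RCA$. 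The norm bound $\Vert\overline\pi_r^{-1}\Vert\leq\Vert\pi_r\Vert\Vert\pi_r^{-1}\Vert$ and the two mutual-inverse computations $\overline\pi_r\circ\overline\pi_r^{-1}=\textup{id}$ and $\overline\pi_r^{-1}\circ\overline\pi_r=\textup{id}$ then go through as in the left-sided proof, using non-degeneracy and the injectivity of $\pi_r$ respectively.

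The $\DCA$ statement is where the bookkeeping is densest, and where I expect the main obstacle to lie. By the second part of Theorem~\ref{thm:doublecentralizers}, $\overline\pi_r\circ\phi_r$ is an \emph{anti}-homomorphism satisfying $(\overline\pi_r\circ\phi_r)((L,R))\pi_r(a)=\pi_r(R(a))$ and $\pi_r(a)(\overline\pi_r\circ\phi_r)((L,R))=\pi_r(L(a))$, so it maps $\DCA$ into $N(\pi_r(A),\mathcal B(X))$. To build its inverse I would introduce the auxiliary map $\mu_r\colon N_r(\pi_r(A),\mathcal B(X))\to\mathcal B(A)$ by $\mu_r(T)(a)=\pi_r^{-1}(\pi_r(a)T)$; the analogous anti-homomorphism computation shows $\mu_r(T)$ is a \emph{left} centralizer, so $\mu_r$ lands in $\LCA$, the opposite twist to $\overline\pi_r^{-1}$. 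The crucial care-point is then assembling the pair in the correct slots: for $T\in N(\pi_r(A),\mathcal B(X))$ one sets $\psi_r(T)=(\mu_r(T),\overline\pi_r^{-1}(T))$, with the $\LCA$-valued map $\mu_r(T)$ as the $L$-part and the $\RCA$-valued map $\overline\pi_r^{-1}(T)$ as the $R$-part, and one verifies the double-centralizer condition $a\,\mu_r(T)(b)=\overline\pi_r^{-1}(T)(a)\,b$ by applying $\pi_r$ to both sides and reducing to $\pi_r(b)T\pi_r(a)=\pi_r(b)T\pi_r(a)$. That $\psi_r$ and $\overline\pi_r\circ\phi_r$ are mutually inverse follows as before: $(\overline\pi_r\circ\phi_r)\circ\psi_r=\textup{id}$ is immediate from $\overline\pi_r\circ\overline\pi_r^{-1}=\textup{id}$ and $\phi_r$ retaining the $R$-part, while for the reverse direction one checks $\mu_r(\overline\pi_r(R))=L$ using the second intertwining relation $\pi_r(a)\overline\pi_r(R)=\pi_r(L(a))$ together with the injectivity of $\pi_r$.

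Finally, the ordered and involutive cases are handled as in Theorem~\ref{thm:leftembeddings}: positivity of $\pi_r^{-1}$ passes through the explicit formulas for $\overline\pi_r^{-1}$ and $\mu_r$, making $\psi_r$ positive and hence rendering the isomorphisms isomorphisms of ordered algebras; and in the involutive Hilbert case the anti-isomorphism $\overline\pi_r\circ\phi_r$ is involutive by the second part of Theorem~\ref{thm:doublecentralizers}, so its image is involutively anti-isomorphic to $\DCA$ and the inverse of an involutive map is again involutive. The genuine difficulty throughout is not any single calculation but the consistent tracking of sides, namely ensuring $\overline\pi_r^{-1}$ is a right centralizer, $\mu_r$ a left one, and that these occupy the correct $R$- and $L$-slots of the double centralizer, so that the resulting map on $\DCA$ emerges as an \emph{anti}-isomorphism onto $N(\pi_r(A),\mathcal B(X))$ rather than an isomorphism.
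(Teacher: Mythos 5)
Your proposal is correct and follows essentially the same route as the paper: the paper's proof likewise defines $\overline\pi_r^{-1}(T)(a)=\pi_r^{-1}(T\pi_r(a))$ as a map into $\RCA$, the anti-homomorphism $\mu_r(T)(a)=\pi_r^{-1}(\pi_r(a)T)$ into $\LCA$, and assembles the inverse on $N(\pi_r(A),\mathcal B(X))$ as $T\mapsto(\mu_r(T),\overline\pi_r^{-1}(T))$, exactly your $\psi_r$. Your side-tracking verifications (that $\overline\pi_r^{-1}(T)$ is a right centralizer, $\mu_r(T)$ a left one, and that they occupy the correct slots of the double centralizer) correctly fill in the details the paper leaves to the reader.
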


\begin{proof}
The proof is similar to that of Theorem~\ref{thm:leftembeddings}, and uses Theorem~\ref{thm:rightcentralizers} and the second part of Theorem~\ref{thm:doublecentralizers}. The first step is to prove that the homomorphism $\overline\pi_r^{-1}: N_l(\pi_r(A),\mathcal B(X))\to\RCA$, defined by $\overline\pi_r^{-1}(T)(a)=\pi_r^{-1}(T\pi_r(a))$, for $T\in N_l(\pi_r(A),\mathcal B(X))$ and $a\in A$, is the two-sided inverse of $\overline\pi_r:\RCA\to N_l(\pi_r(A),\mathcal B(X))$. The role of the anti-homomorphism $\mu_l$ in the previous proof is taken over by the anti-homomorphism $\mu_r: N_r(\pi_r(A),\mathcal B(X))\to\LCA$, defined by $\mu_r(T)(a)=\pi_r^{-1}(\pi_r(a)T)$, for $T\in N_r(\pi_r(A),\mathcal B(X))$, and $a\in A$. These combine to the anti-homomorphism $(\overline\pi_r\circ\phi)^{-1}: N(\pi_l(A), \mathcal B(X))\to\DCA$, which is given by $(\overline\pi_r\circ\phi)^{-1}(T)=(\mu_r(T),\overline\pi_r^{-1}(T))$, for $T\in N(\pi_r(A),\mathcal B(X))$.
\end{proof}

\begin{corollary}\label{cor:rightisomorphiccopies}
Let $A$ be a normed algebra with a bounded approximate right identity, and let $X$ be a Banach space. Suppose that $\pi_r:A\to\mathcal B(X)$ provides $X$ with the structure of a non-degenerate faithful normed right $A$-module, and that $\pi_r$ is an anti-embedding of $A$ as a topological algebra. Then $\RCA$ is canonically isomorphic, as a topological algebra, with $N_l(\pi_r(A),\mathcal B(X))$, where $N_l(\pi_r(A),\mathcal B(X))$ can carry either the norm from $\mathcal B(X)$ or the equivalent norm from $\mathcal M_l(\pi_r(A))$. Likewise, $\DCA$ is canonically anti-isomorphic, as a topological algebra, with $N(\pi_r(A),\mathcal B(X))$, where $N(\pi_r(A),\mathcal B(X))$ can carry either the norm from $\mathcal B(X)$ or the equivalent norm from $\mathcal M(\pi_r(A))$.

If, in addition, $A$ is an ordered normed algebra with a positive bounded approximate right identity, $X$ is ordered with a closed positive cone, and $\pi_r$ is an anti-isomorphism of ordered algebras between $A$ and $\pi_r(A)$, then the above canonical isomorphism between $\RCA$ and $N_l(\pi_r(A),\mathcal B(X))$ is an isomorphisms of ordered algebras, and the above canonical anti-isomorphism between $\DCA$ and $N(\pi_r(A),\mathcal B(X))$ is an anti-isomorphism of ordered algebras.

Alternatively, if, in addition, $A$ has a bounded involution, $X$ is a Hilbert space, and $\pi_r$ is involutive, then the above canonical anti-isomorphism between $\DCA$ and $N(\pi_r(A),\mathcal B(X))$ is an anti-isomorphism of involutive algebras.
\end{corollary}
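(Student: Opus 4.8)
The plan is to read this corollary off directly from Theorem~\ref{thm:rightembeddings}, exactly as Corollary~\ref{cor:leftisomorphiccopies} is read off from Theorem~\ref{thm:leftembeddings}. The one preliminary observation is that the phrase ``$\pi_r$ is an anti-embedding of $A$ as a topological algebra'' means precisely that $\pi_r\colon A\to\pi_r(A)$ is a bounded anti-isomorphism possessing a bounded inverse $\pi_r^{-1}\colon\pi_r(A)\to A$. This is exactly the standing hypothesis under which the second and all later conclusions of Theorem~\ref{thm:rightembeddings} are stated, so that theorem is applicable throughout, and no new construction is required.

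First I would dispose of the assertion for $\RCA$. By Theorem~\ref{thm:rightembeddings}, $\overline\pi_r$ is a bounded algebra isomorphism of $\RCA$ onto $N_l(\pi_r(A),\mathcal B(X))$ whose inverse is bounded as well, with the stated norm bounds holding whether $N_l(\pi_r(A),\mathcal B(X))$ carries the norm inherited from $\mathcal B(X)$ or the norm from $\mathcal M_l(\pi_r(A))$. That these two norms are in fact equivalent was established in the discussion preceding Theorem~\ref{thm:leftembeddings}, which was explicitly carried out for both the left-sided and the right-sided case. Hence $\overline\pi_r$ is a topological isomorphism onto $N_l(\pi_r(A),\mathcal B(X))$ for either choice of norm, which is the first assertion; its canonicity is built into the construction of $\overline\pi_r$ in Theorem~\ref{thm:rightcentralizers}.

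Next I would treat $\DCA$. Here Theorem~\ref{thm:rightembeddings} provides that $\overline\pi_r\circ\phi_r$ is a bounded algebra \emph{anti}-isomorphism of $\DCA$ onto $N(\pi_r(A),\mathcal B(X))$ with bounded inverse, again for either of the two equivalent norms on $N(\pi_r(A),\mathcal B(X))$; this yields the canonical topological anti-isomorphism. The ordered and involutive refinements then require no extra work: the positivity of all the maps in the ordered case, and the involutivity of $\overline\pi_r\circ\phi_r$ in the Hilbert case, are already part of the conclusions of Theorem~\ref{thm:rightembeddings} and pass through the identifications unchanged.

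The proof involves no genuine obstacle, being merely a repackaging of Theorem~\ref{thm:rightembeddings}. The single point demanding care is the left/right bookkeeping emphasized in the introduction: one must record that the \emph{right} centralizer algebra $\RCA$ is isomorphic (not anti-isomorphic) to the \emph{left} normalizer $N_l(\pi_r(A),\mathcal B(X))$, while $\DCA$ is \emph{anti}-isomorphic to the full normalizer $N(\pi_r(A),\mathcal B(X))$. Keeping these sidednesses straight, rather than defaulting to the naive mirror image of the left-sided Corollary~\ref{cor:leftisomorphiccopies}, is the only place where an error could plausibly creep in.
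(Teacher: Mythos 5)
Your proposal is correct and matches the paper's own treatment: the paper gives no separate proof of Corollary~\ref{cor:rightisomorphiccopies}, presenting it (like Corollary~\ref{cor:leftisomorphiccopies}) as a direct excerpt of the embedding theorem, here Theorem~\ref{thm:rightembeddings}, with the norm equivalence coming from the discussion preceding Theorem~\ref{thm:leftembeddings}, which indeed covers both the left- and right-sided cases. You also correctly keep the crucial sidedness straight ($\RCA$ isomorphic to the \emph{left} normalizer, $\DCA$ \emph{anti}-isomorphic to the normalizer), which is the only genuine pitfall.
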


%%%%%%%%%%%%%%   End of the text   %%%%%%%%%%%%%%%%%%%%%%%%%%%%%%%%%%%%%%%%%%%%%%%%%%%%

%%%%%%%%%%%%%%   Bibliography   %%%%%%%%%%%%%%%%%%%%%%%%%%%%%%%%%%%%%%%%%%%%%%%%%%%%%%%

%%%%%%%%%%%%%%%%%%%%%%%%%%%%%%%%%%%%%%%%%%%%%%%%%%%%%%%%%%%%%%%%%%%%%%%%%%%%%%%%%%%%%%%

%%%%%%%%%%%%%%   End of the document   %%%%%%%%%%%%%%%%%%%%%%%%%%%%%%%%%%%%%%%%%%%%%%%%
\end{document}